\newtheorem{thm}{Theorem}[section]
\newtheorem{cor}[thm]{Corollary}
\newtheorem{lem}[thm]{Lemma}
\theoremstyle{definition}
\newtheorem{defi}[thm]{Definition}
\theoremstyle{remark}
\newtheorem{rmk}[thm]{\bf Remark}
\newtheorem{exm}[thm]{\bf Example}
\numberwithin{equation}{section}
\numberwithin{figure}{section}
\def \b{\bar}
\def\diag{\text{diag}}
\def\D{\mathbb{D}}
\def \E{\mathbb{E}}
\def\la{\lambda}
\def \s{\sigma}
\def \S{\mathbb{S}}
\def \spec{\text{Spec}}
\def \T{\mathbb{T}}
\DeclareMathOperator{\sgn}{sgn}
\begin{document}
\title[Hypergraph Covering and Ramanujan Hypergraph]
{Hypergraph coverings and Ramanujan Hypergraphs}

\author[Y.-M. Song]{Yi-Min Song}
\address{Center for Pure Mathematics, School of Mathematical Sciences, Anhui University, Hefei 230601, P. R. China}
\email{songym@stu.ahu.edu.cn}

\author[Y.-Z. Fan]{Yi-Zheng Fan$^\dagger$}
\address{Center for Pure Mathematics, School of Mathematical Sciences, Anhui University, Hefei 230601, P. R. China}
\email{fanyz@ahu.edu.cn}
\thanks{$^\dagger$The corresponding author.
Supported by National Natural Science Foundation of China (Grant No. 12331012).}

\author[Z. Miao]{Zhengke Miao$^\ddagger$}
\address{Research Institute of Mathematical Science and School of Mathematics and Statistics, Jiangsu Normal University, Xuzhou 221116, P. R. China}
\email{zkmiao@jsnu.edu.cn}
\thanks{$^\ddagger$Supported by National Natural Science Foundation of China (Grant No. 12031018).}

\subjclass[2000]{05C50, 05C65}

\keywords{Hypergraph coverings, Ramanujan covering, Ramanujan hypergraphs,  second eigenvalue, matching polynomial, universal cover, interlacing family}

\begin{abstract}
In this paper we investigate Ramanujan hypergraphs by using hypergraph coverings.
%Let $\bar{H}$ be a $2$-fold covering (or $2$-lift) of a hypergraph $H$.
 We first show that the spectrum of a $k$-fold covering $\bar{H}$ of a connected hypergraph $H$ contains the spectrum of $H$, and that it is the union of the spectrum of $H$ and the spectrum of an incidence-signed hypergraph with $H$ as underlying hypergraph if $k=2$, which generalizes Bilu-Linial result on graph coverings.
 We give a lower bound for the second largest eigenvalue of a $d$-regular hypergraph by universal cover, which generalizes Alon-Boppana bound on $d$-regular graphs and Feng-Li bound on $(d,r)$-regular hypergraphs.
By using interlacing family of polynomials, we prove that every $(d,r)$-regular hypergraph has a right-sided Ramanujan $2$-covering, and  has a left-sided Ramanujan $2$-covering if the roots of the matching polynomial of its incident graph satisfy some condition.
By Ramanujan $2$-coverings, we prove the existence of some families of infinite many  left-sided or right-sided $(d,r)$-regular Ramanujan hypergraphs under certain conditions on $d$ and $r$.
%every degree greater than $2$.
%We also prove that there exist infinitely many $(d+1,d)$-regular left-sided (or right-sided) Ramanujan hypergraphs for any prime power $d$ greater than $4$.
%In addition,
\end{abstract}

\maketitle

\section{introduction}

A \emph{hypergraph} $H=(V,E)$ consists of a vertex set $V=\{v_1,v_2,\cdots,v_n\}$ denoted by $V(H)$ and an edge set $E=\{e_1,e_2,\cdots,e_m\}$ denoted by $E(H)$, where $e_i\subseteq V$ for $i\in[m]:=\{1,2,\cdots,m\}$.
If $|e_i|=r$ for each $i\in[m]$ and $r\ge2$, then $H$ is called an $r$-\emph{uniform} hypergraph.
For a vertex $v\in V(H)$, denote by $N_H(v)$ or simple $N(v)$ the neighborhood of $v$, i.e., the set of vertices of $H$ adjacent to $v$; and denote by $E_H(v)$ or $E(v)$ the set of edges containing $v$.
The \emph{degree} of $v$, denoted by $d_v$, is the cardinality of the set $E(v)$.
The hypergraph $H$ is called \emph{$d$-regular} if each vertex has degree $d$, and is called \emph{$(d,r)$-regular} if it is $d$-regular and $r$-uniform.
A simple graph is a $2$-uniform hypergraph without multiple edge.

A \emph{homomorphism} from a hypergraph $\bar{H}$ to a hypergraph $H$ is a map $\varpi: V(\bar{H})\to V(H)$ such that $\varpi(e) \in E(H)$ for each $e \in E(\bar{H})$; namely, $\varpi$ maps edges to edges.
So $\varpi$ induces a map denoted by $\tilde{\varpi}$ from $E(\bar{H})$ to $E(H)$, and particularly $\tilde{\varpi}$ maps  $E_{\bar{H}}(\bar{v})$ to $E_H(\varpi(\bar{v}))$ for each vertex $\bar{v} \in V(\bar{H})$.
%We always assume that $\varpi|_e$ is a bijection so that $e$ and $\varpi(e)$ have the same size.

\begin{defi}\label{cov-def}
A homomorphism $\varpi$ from $\bar{H}$ to $H$ is called a \emph{covering projection} if $\varpi$ is a surjection, and the induced map $\tilde{\varpi}|_{E_{\bar{H}}(\bar{v})}: E_{\bar{H}}(\bar{v}) \to E_H(v)$ is a bijection for each vertex $v \in V(H)$ and each $\bar{v} \in \pi^{-1}(v)$.
\end{defi}

Throughout of this paper, \emph{we always assume that the covering projection $\varpi$ in Definition \ref{cov-def} satisfies the following condition: for any edge $e \in E(\b{H})$, $\varpi|_e: e \to \varpi(e)$ is a bijection so that $e$ and $\varpi(e)$ have the same size}.
Under this assumption, if $\bar{H}$ is $m$-uniform, so is $H$.
%The covering projection $\pi$ is a surjective homomorphism from $\bar{H}$ to $H$ which preserves the local vertex-edge incidences; and if $\bar{H}$ is $m$-uniform, so is $H$.
If $H$ is connected, then there exists a positive integer $k$ such that each vertex $v$ of $H$ has
$k$ vertices  in its preimage $\pi^{-1}(v)$, and each edge $e$ of $H$ has $k$ edges in $\pi^{-1}(e)$.
In this case, $\pi$ is called a \emph{$k$-fold covering projection} (or $k$-sheeted covering projection) from $\bar{H}$ to $H$, and $\bar{H}$ is called a \emph{$k$-cover} (or \emph{$k$-lift}) of $H$.
In Definition \ref{cov-def}, if both $\bar{H}$ and $H$ are  simple graphs, then the condition $\tilde{\varpi}|_{E_{\b{H}}(\bar{v})}: E_{\b{H}}(\bar{v}) \to E_H(v)$ can be replaced by $\varpi|_{N_{\b{H}}(\bar{v})}: N_{\b{H}}(\bar{v}) \to N_H(v)$.
% as each edge of $E_{\b{H}}(\bar{v})$ (respectively, $E_H(v)$) contains exactly two vertices: $\bar{v}$ and one neighbor of $\bar{v}$ (respectively, $v$ and one neighbor of $v$).
%Gross and Tucker \cite{GT} showed that all coverings of simple graphs can be characterized by the derived graphs of permutation voltage graphs.
%Stark and Terras \cite{ST} showed the (Ihara) zeta function of a finite graph divides the zeta function of any covering over the graph.
%Li and Hou \cite{LH} applied Gross and Tucker’s method to generate all hypergraph coverings, and proved that the zeta function of a finite hypergraph divides the zeta function of any covering over the hypergraph.
%Recently, Song and Fan et al. \cite{Song} investigated the relationship between the spectrum of a uniform hypergraph and that of its cover in the setting of adjacency tensor eigenvalues.

 Feng and Li \cite{FL} introduced the \emph{adjacency matrix} $A(H)=(a_{uv})$ of a hypergraph $H$, which is defined as follows:
$$ a_{uv}=\left\{ \begin{array}{l} | \{e \in E(H): \{u,v\} \subseteq e\} | \mbox{~if~} u \ne v,\\
0 \mbox{~if~} u = v. \end{array}\right.$$
Note that if $H$ is a simple graph, then $A(H)$ is the usual adjacency matrix of $H$.
As $A(H)$ is real and symmetric, all eigenvalues of $A(H)$ are real and can be arranged as
$$ \la_1(H) \ge \la_2(H) \ge \cdots \ge \la_n(H),$$
where $n$ is the number of vertices of $H$.
By Perron-Frobenius theorem, the largest eigenvalue $\la_1(H)$ is exactly the spectral radius of $A(H)$, denoted by $\rho(H)$.
We note there are other ways to define the eigenvalues for hypergraphs, for example, as the eigenvalues for the adjacency tensor (hypermatrix) \cite{CD,FW,LM3,Lim,Qi} or the Laplacian operators of the simplicial complex consisting of all subsets of edges of the hypergraphs \cite{BGP,DR,HJ,PR}.
In this paper the \emph{spectrum, eigenvalues, and eigenvectors} of a hypergraph always refer to the adjacency matrix of the hypergraph unless specified somewhere.

The aim of this paper is to investigate the Ramanujan hypergraphs by using coverings.
We start the discussion from a simple graph $G$.
Let $$\la(G):=\max\{|\la_i(G)|: \la_i(G) \ne \pm \la_1(G)\},$$ which is called the \emph{second eigenvalue} of $G$.
It is known that  $\la(G)$ is closely related to the expansion property of $G$: the smaller $\la(G)$ is, the better expanding $G$ is; see \cite{Alon, HLW}.
However, $\la(G)$ cannot be arbitrarily small.
In fact, $\la(G) \ge \rho(\T_G)-o_n(1)$ (\cite{Green,Cio}), where $\T_G$ is the universal cover of $G$, and $\rho(\T_G)$ is the spectral radius of $\T_G$.
For example, if $G$ is $d$-regular, then $\T_G$ is an infinite $d$-ary tree denoted by $\T_d$ and $\rho(\T_d)=2 \sqrt{d-1}$ (\cite{GM}), which yields the Alon-Boppana bound (\cite{Alon,Nil}):
\begin{equation}\label{AB}
\la(G) \ge 2 \sqrt{d-1}-o_n(1);
\end{equation}
if $G$ is  $(d,r)$-biregular, then $\mathbb{T}_G$ is an infinite $(d,r)$-biregular tree $\T_{d,r}$ and $\rho(\T_{d,r})=\sqrt{d-1}+\sqrt{r-1}$ (\cite{LS}), which yields the Feng-Li bound (\cite{FL,LS}):
\begin{equation}\label{FengLiB}
\la(G) \ge \sqrt{d-1}+\sqrt{r-1}-o_n(1).
\end{equation}

%Let $\varpi: \bar{G} \to G$ be a $k$-fold covering projection.
%If $f: V(G) \to \mathbb{R}$ is an eigenvector of $G$ defining on the vertices of $G$, then $f \circ \pi$ is also an eigenvector of $\bar{G}$.
%So, out of $kn$ eigenvalues of $A(\bar{G})$ (considered as a multiset), $n$ eigenvalues are induced from $G$ and are referred to as \emph{old eigenvalues}, and the other $(k-1)n$ eigenvalues are called the \emph{new eigenvalues}.
%Hall et al. \cite{HPS} called $\bar{G}$ is a \emph{Ramanujan covering} of $G$ if all the new eigenvalues lie in $[-\rho(\T_G), \rho(\T_G)]$, and call $\bar{G}$ is a \emph{one-sided Ramanujan covering} of $G$ if all the new eigenvalues are bounded from above by $\rho(\T_G)$\footnote{Hall et al. \cite{HPS} remarked that one can define a another version of one-sided Ramanujan covering as having all new eigenvalues bounded from below by $-\rho(\T_G)$.}.

Following \cite{HLW,LPS}, a graph $G$ is called \emph{Ramanujan} if  $\la(G) \le \rho(\T_G)$.
Bilu and Linial \cite{BL} suggested to construct Ramanujan graphs by Ramanujan  $2$-covering: start with your favorite $d$-regular Ramanujan graph and construct an infinite tower of Ramanujan $2$-coverings.
They conjectured that every (regular) graph has a Ramanujan $2$-covering. This approach turned out to be very useful in the groundbreaking result of Marcus, Spielman and Srivastava \cite{MSS}, who proved that every graph has a one-sided Ramanujan 2-covering. Hall et al. \cite{HPS} generalized the result of \cite{MSS} to coverings of every degree.
This translates to that there are infinitely many $d$-regular bipartite Ramanujan graphs of every degree $d$. The question remains open with respect to fully (i.e. non-bipartite) Ramanujan graphs.

Suppose that $H$ is a connected $(d,r)$-regular hypergraph.
Then $\la_1(H)=d(r-1)$, called the \emph{trivial eigenvalue} of $H$.
Feng and Li \cite{FL} proved that
\begin{equation}\label{FengLiB2} \la_2(H) \ge r-2+ 2\sqrt{(d-1)(r-1)}-o_n(1).
\end{equation}
 Cioab\v{a} et al. \cite{CKMNO}
 %improved the bound for $\la_2(H)$ given in \cite{FL},
 obtained an upper bound on the order of $H$ with $\la_2(H)$ bounded by a given value.
Note that the universal cover of $H$, denoted by $\T_H$, is one of the bipartite halves of the infinite $(d,r)$-biregular tree $\T_{d,r}$, denoted by $\D_{d,r}$, which has the spectral radius
 $$\rho(\D_{d,r})=r-2+ 2\sqrt{(d-1)(r-1)}.$$
 So Feng-Li bound (\ref{FengLiB2}) becomes
 $$ \la_2(H) \ge  \rho(\T_H)-o_n(1),$$
 for a $(d,r)$-regular hypergraph $H$.

Li and S\'ole \cite{LS} introduced the notion of Ramanujan hypergraph.
Following the definition,
Dumitriu and Zhu \cite{DZ,DZ2} proved that almost surely all $(d,r)$-regular hypergraphs are nearly Ramanujan, which is an extension of Alon’s second eigenvalue conjecture \cite{Alon} for random regular graphs to random regular hypergraphs.
Motivated by Hall et al. \cite{HPS} of introducing one-sided Ramanujan graphs,
we use left-sided or right-sided Ramanujan hypergraphs to distinguish the conditions for $\la$ in the upper bound or lower bound case.
Note that if $d<r$, then $H$ has an \emph{obvious eigenvalue} $-d$ with multiplicity $|V(H)|-|E(H)|$; see Section \ref{Sec-HM}.

\begin{defi}\cite{LS}\label{RamaGraph}
Let $H$ be a connected $(d,r)$-regular hypergraph.
We call $H$ a \emph{Ramanujan hypergraph} if any non-trivial non-obvious eigenvalue $\la$ of $H$ satisfies
\begin{equation}\label{RamaH} | \la-(r-2)| \le 2\sqrt{(d-1)(r-1)},\end{equation}
 $H$ a  \emph{right-sided Ramanujan hypergraph} if any non-trivial eigenvalue $\la$ of $H$ satisfies
\begin{equation}\label{RamaHL}  \la \le r-2+ 2\sqrt{(d-1)(r-1)},\end{equation}
and $H$ a  \emph{left-sided Ramanujan hypergraph}
if any non-obvious eigenvalue $\la$ of $H$ satisfies
\begin{equation}\label{RamaHR} \la \ge r-2- 2\sqrt{(d-1)(r-1)},\end{equation}
\end{defi}

%When $r=2$ (i.e. $H$ is a $d$-regular graph), the condition (\ref{RamaH}) is reduced to $|\la| \le 2 \sqrt{d-1}$, and $H$ has the obvious eigenvalue $-d$ if and only if $H$ is bipartite.
%So the definition of Ramanujan hypergraph covers the graph case.

We summarize the main results of this paper as follows.
%In this paper, we will follow Bilu and Linial's idea to investigate the existence and construction of Ramanujan hypergraphs by using Ramanujan covering.
%The following are our main results.
%Let $H$ be a hypergraph and let $\bar{H}$ be a $k$-cover of $H$.
Firstly we  generalize Bilu-Linial's result \cite{BL} on graph coverings to hypergraph coverings, which says the spectrum of a $2$-covering $\bar{G}$ of a connected graph $G$ is a multi-set union of the spectrum of $G$ and the spectrum of a signed graph with $G$ as underlying graph.

\begin{thm}\label{inclusion}
Let $H$ be a connected hypergraph and let $\bar{H}$ be a $k$-cover of $H$.
Then, as multi-sets, the spectrum of $\bar{H}$ contains that of $H$; in particular, if $k=2$, then the spectrum of $\bar{H}$ is a multi-set union of the spectrum of $H$ and the spectrum of an incidence-signed hypergraph with $H$ as underlying hypergraph.
\end{thm}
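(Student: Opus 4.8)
The plan is to build an explicit linear map pulling eigenfunctions of $A(H)$ back to eigenfunctions of $A(\bar H)$, and — for $k=2$ — to decompose $\mathbb{R}^{V(\bar H)}$ into two pieces invariant under the deck transformation, one carrying $\spec(H)$ and the other the spectrum of a suitable incidence-signed hypergraph.

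First I would define the pullback $\varpi^*\colon\mathbb{R}^{V(H)}\to\mathbb{R}^{V(\bar H)}$ by $(\varpi^* x)(\bar v)=x(\varpi(\bar v))$, and prove the intertwining $A(\bar H)\,\varpi^*=\varpi^*\,A(H)$. Fixing $\bar v$ with $v=\varpi(\bar v)$,
\[
 (A(\bar H)\,\varpi^* x)(\bar v)=\sum_{\bar e\in E_{\bar H}(\bar v)}\ \sum_{\bar u\in\bar e\setminus\{\bar v\}}(\varpi^* x)(\bar u).
\]
By Definition~\ref{cov-def}, $\tilde\varpi$ restricts to a bijection $E_{\bar H}(\bar v)\to E_H(v)$, and by the standing assumption $\varpi$ restricts to a bijection $\bar e\to\varpi(\bar e)$ for each edge; since $(\varpi^* x)(\bar u)=x(\varpi(\bar u))$, the right-hand side collapses to $\sum_{e\in E_H(v)}\sum_{u\in e\setminus\{v\}}x(u)=(A(H)x)(v)$, which proves the identity. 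Hence $\varpi^*$ maps $\ker(A(H)-\lambda)$ into $\ker(A(\bar H)-\lambda)$ for every $\lambda$, and it is injective because $\varpi$ is onto $V(H)$; therefore $\dim\ker(A(\bar H)-\lambda)\ge\dim\ker(A(H)-\lambda)$ for all $\lambda$, which is exactly the multiset inclusion $\spec(H)\subseteq\spec(\bar H)$.

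For $k=2$ I would fix a labelling $v^+,v^-$ of the two vertices over each $v$, identifying $\mathbb{R}^{V(\bar H)}$ with pairs $(x^+,x^-)\in\mathbb{R}^{V(H)}\oplus\mathbb{R}^{V(H)}$ via $\bar x(v^{\pm})=x^{\pm}(v)$. The nontrivial deck transformation $\tau$, exchanging $v^+\leftrightarrow v^-$, is an automorphism of $\bar H$: the incidence forced by Definition~\ref{cov-def} shows that each of the two edges over a given $e$ meets each fibre over a vertex of $e$ in exactly one point, so $\tau$ permutes — in fact swaps — the two edges over each $e$, and thus commutes with $A(\bar H)$. Consequently $\mathbb{R}^{V(\bar H)}$ is the orthogonal sum of the symmetric part $\{x^+=x^-\}$ (the image of $\varpi^*$, on which $A(\bar H)$ acts as $A(H)$ by the previous step) and the antisymmetric part $\{x^+=-x^-\}$, both $A(\bar H)$-invariant of dimension $|V(H)|$. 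On the antisymmetric part put $\bar x(v^+)=x(v)=-\bar x(v^-)$; for each edge $e$ choose one of the two edges of $\bar H$ lying over $e$, and for $w\in e$ let $\sigma_e(w)\in\{+1,-1\}$ equal $+1$ exactly when $w^+$ lies in the chosen one. Tracking the local bijections, the edge over $e$ through $v^+$ contains, over each $w\in e$, the vertex $w^{\sigma_e(v)\sigma_e(w)}$, so
\[
 (A(\bar H)\bar x)(v^+)=\sum_{e\ni v}\sigma_e(v)\sum_{w\in e\setminus\{v\}}\sigma_e(w)\,x(w),
\]
i.e.\ $A(\bar H)$ acts on the antisymmetric part as the symmetric matrix $B$ with $B_{vw}=\sum_{e\ni v,w}\sigma_e(v)\sigma_e(w)$ for $v\ne w$ and $B_{vv}=0$. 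This is precisely the adjacency matrix of the incidence-signed hypergraph with underlying hypergraph $H$ obtained by assigning the sign $\sigma_e(v)$ to the incidence $(v,e)$ (well defined up to switching, which leaves the spectrum unchanged). Taking the union of the two invariant summands gives $\spec(\bar H)=\spec(H)\sqcup\spec(B)$.

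The two local computations are routine reindexing via the covering bijections. The point that needs care, which I would treat as the crux, is the $k=2$ bookkeeping: verifying that $\tau$ really is a hypergraph automorphism (this is where the "each edge over $e$ hits each fibre once" counting enters) and that the antisymmetric action is literally the adjacency matrix of an incidence-signed hypergraph in the intended sense, so that one has to pin down the right definition of incidence-signed hypergraph and its adjacency matrix and check the match, thereby recovering the Bilu--Linial signed-graph picture when $H$ is a graph. I would also note that neither repeated edges in $H$ nor disconnectedness of $\bar H$ cause any difficulty, since all sums run over (multi)sets of edges and $\tau$ is built directly from the covering data rather than from connectivity.
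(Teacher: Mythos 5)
Your proof is correct, and it ends at the same two eigenvector computations as the paper (the lift $x \mapsto (x,x)$ carrying the old spectrum and $y \mapsto (y,-y)$ carrying the new one), but it gets there by a genuinely different route. The paper first invokes Lemma \ref{hypcov} --- which rests on the Gross--Tucker permutation-voltage theorem --- to replace the abstract cover $\bar H$ by the concrete model $H_B^\phi$, and then verifies the eigenvector equations inside that model (Lemma \ref{spec}), where the incidence signs are simply $\sgn\phi(v,e)$. You instead work directly with the covering projection: the intertwining $A(\bar H)\varpi^* = \varpi^* A(H)$ gives the multiset inclusion for every $k$ with no normal form for the cover, and for $k=2$ you prove from Definition \ref{cov-def} alone that the fibre swap $\tau$ is an automorphism, split $\mathbb{R}^{V(\bar H)}$ into its $\pm1$-eigenspaces, and read off the incidence signs $\sigma_e(v)$ from a choice of one edge in each fibre $\tilde\varpi^{-1}(e)$; these $\sigma_e(v)$ are exactly the paper's $\sgn\phi(v,e)$, and your matrix $B$ is exactly $A(H,\phi)$. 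Your crux step --- that $\tau$ swaps the two edges over each $e$ --- does follow from the two bijectivity conditions (on $E_{\bar H}(\bar v)$ and on $\varpi|_e$), so there is no gap. What your approach buys is self-containedness and a more transparent accounting of why the two spectra exhaust $\spec A(\bar H)$: the symmetric and antisymmetric parts are orthogonal invariant subspaces of complementary dimension, rather than two families of lifted eigenvectors checked pairwise orthogonal. What the paper's detour buys is the voltage model itself, which it reuses later (e.g.\ in Section 5), so the reduction via Lemma \ref{hypcov} is not wasted there.
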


Let $\bar{H}$ be a $2$-cover of an $n$-vertex connected hypergraph $H$.
By Theorem \ref{inclusion}, $n$ eigenvalues of $\bar{H}$ are induced from $H$, called \emph{old eigenvalues}, and the other $n$ eigenvalues are induced form an incidence-signed hypergraph, called \emph{new eigenvalues}.
%We introduce the (left-sided, right-sided) Ramanujan covering as follows, and
%prove that every hypergraph  has a right-sided Ramanujan 2-covering (Theorem \ref{onesR}),
%and hence we can construct an infinite family of $(d,d)$-regular Ramanujan.
We introduce the notion of Ramanujan covering for the existence or construction of Ramanujan hypergraphs.

\begin{defi}\label{RamaCov}
Let $\bar{H}$ be a $2$-cover of a connected $(d,r)$-regular hypergraph $H$.
We call $\bar{H}$ is a \emph{Ramanujan covering} of $H$ if all the non-obvious new eigenvalues holds Eq. (\ref{RamaH}),  a \emph{right-sided Ramanujan covering} of $H$ if all new eigenvalue holds (\ref{RamaHL}), and a \emph{left-sided Ramanujan covering} of $H$ if all non-obvious new eigenvalue holds (\ref{RamaHR}).
\end{defi}

We generalize Alon-Boppana bound (\ref{AB}) on $d$-regular graphs and Feng-Li bound (\ref{FengLiB2}) on $(d,r)$-regular hypergraphs to $d$-regular hypergraphs.

\begin{thm}\label{lowB}
Let $H$ be a $d$-regular hypergraph, and let $\T_H$ be the universal cover of $H$.
Then
$$\la_2(H) \ge \rho(\T_H)-o_n(1).$$
\end{thm}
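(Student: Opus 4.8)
The plan is to mimic the classical Alon–Boppana argument via counting closed walks, but carried out on the hypergraph level. First I would use the hypothesis that $H$ is $d$-regular to identify $\rho(\T_H)$: since $H$ is $d$-regular (but not necessarily uniform) and connected, its universal cover $\T_H$ is a tree all of whose vertices have degree $d$ in the hyperedge sense; but because closed walks in the adjacency matrix $A(H)$ only see pairs of vertices inside common edges, the relevant quantity is the spectral radius of $A(\T_H)$, which is finite and equals the norm of $A(\T_H)$ acting on $\ell^2(V(\T_H))$. The first step is therefore to record that $\rho(\T_H) = \limsup_{\ell\to\infty}\big(\mathrm{tr}_v\, A(\T_H)^{2\ell}\big)^{1/(2\ell)}$ for a fixed root vertex $v$, i.e. the growth rate of the number of closed walks of length $2\ell$ in $\T_H$ based at $v$ counted with multiplicity (each step of the walk picks an edge through the current vertex and then a neighbor inside that edge); this is a standard fact about the norm of the adjacency operator on a locally finite tree.

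Second, I would set up the local comparison. Fix $\varepsilon > 0$ and choose $\ell$ large enough that the number $W_{2\ell}$ of closed $A(\T_H)$-walks of length $2\ell$ based at the root satisfies $W_{2\ell}^{1/(2\ell)} \ge \rho(\T_H) - \varepsilon$. Because $H$ is finite, for $n = |V(H)|$ large there exists a vertex $u$ of $H$ whose $\ell$-ball $B_\ell(u)$ in $H$ is a tree isomorphic (as a rooted hypergraph, respecting edge sizes) to the $\ell$-ball in $\T_H$ — here I use that the girth-type injectivity radius of $H$ must be unbounded as $n\to\infty$ for $d$-regular hypergraphs with bounded edge sizes, or, if edge sizes are unbounded, I would instead restrict attention to the subgraph of walks that only traverse a fixed finite local pattern and note $W_{2\ell}$ is still a valid lower bound. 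Every closed $A(\T_H)$-walk of length $2\ell$ based at the root of the tree lifts to a closed $A(H)$-walk of length $2\ell$ based at $u$, because such a walk stays inside $B_\ell(u)$, so $\big(A(H)^{2\ell}\big)_{uu} \ge W_{2\ell}$, and hence $\mathrm{tr}\, A(H)^{2\ell} \ge W_{2\ell}$.

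Third comes the standard trace-vs-spectrum step, adapted to kill the trivial eigenvalue $\lambda_1 = \rho(H)$ and (when $d < r$) any obvious eigenvalue; but note Theorem \ref{lowB} only asks about $\lambda_2(H)$, so it suffices to split off $\lambda_1$. Writing $\mathrm{tr}\, A(H)^{2\ell} = \sum_i \lambda_i(H)^{2\ell}$, I separate the top term and bound $\lambda_1(H)^{2\ell} \le \rho(H)^{2\ell}$. To get a lower bound on $\lambda_2(H)$ I use a two-walk / orthogonality trick as in Nilli's proof: take two vertices $u_1, u_2$ at distance $> 2\ell+2$ in $H$, each with a tree-like $\ell$-ball, and build from the corresponding walk-count vectors two orthonormal test vectors that are also orthogonal to the Perron vector (after the usual centering/sign correction); then the min–max characterization gives $\lambda_2(H) \ge \big(W_{2\ell}\big)^{1/(2\ell)} \cdot (1 - o(1)) \ge \rho(\T_H) - \varepsilon - o_n(1)$. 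Letting $\varepsilon\to 0$ and $n\to\infty$ finishes the proof.

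The main obstacle I anticipate is the geometry of hyperedges in the counting step: in a general $d$-regular hypergraph the edges may have large or varying size, so "tree-like ball" must be defined carefully (the incidence bipartite graph should be locally tree-like, not merely $A(H)$ being locally a tree), and one must check that the adjacency-walk count $\big(A(H)^{2\ell}\big)_{uu}$ genuinely matches the tree count $W_{2\ell}$ — that multiplicities coming from $a_{vw} = |\{e : \{v,w\}\subseteq e\}| > 1$ are correctly reproduced in $\T_H$. Establishing this faithful local isomorphism (on the incidence structure, with the induced multiplicities) and the existence of enough far-apart tree-like balls for large $n$ is the technical heart; once it is in place, the eigenvalue extraction is routine and parallels \cite{Nil, Cio}.
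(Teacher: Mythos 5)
Your plan is a direct Alon--Boppana/Nilli-style argument carried out on $H$ itself, which in effect re-proves Greenberg's theorem from scratch at the hypergraph level, and as written it has a genuine gap. The concrete problem is your second step: it is simply false that a large $d$-regular hypergraph must contain a vertex whose $\ell$-ball is tree-like (isomorphic to the $\ell$-ball of $\T_H$) --- already for ordinary $d$-regular graphs one can have every vertex on a triangle, so the ``injectivity radius'' need not grow with $n$ at all. The trace inequality $\bigl(A(H)^{2\ell}\bigr)_{uu}\ge\bigl(A(\T_H)^{2\ell}\bigr)_{oo}$ does not actually need tree-like balls (it holds for \emph{every} $u$ by unique lifting of walks along the covering projection $\T_H\to H$, which also automatically accounts for the multiplicities $a_{vw}>1$ you flag), but the step you lean the tree-like balls on --- building two far-apart test vectors with Rayleigh quotient near $\rho(\T_H)$ and combining them orthogonally to the Perron vector --- is precisely the part you defer as ``the technical heart.'' Note also that for a $d$-regular but non-uniform $H$ the Perron vector is not the all-ones vector, and that a pure trace bound only controls $\max_{i\ge2}|\la_i|$, not $\la_2$; so without an explicit test-vector construction (e.g.\ pushing down a nonnegative, finitely supported near-maximizer of $\langle A(\T_H)x,x\rangle$ as in Cioab\u{a}'s proof of Greenberg's theorem) the argument is incomplete.

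The paper avoids all of this by not re-proving anything: it applies the already-quoted graph-level bound (\ref{Green}) to the \emph{incidence graph} $B_H$, which is an ordinary graph, and transfers the conclusion back to $H$ algebraically. Since $H$ is $d$-regular, the vertex block of $A(B_H)^2$ is $Q(H)=dI+A(H)$, so $\la_2(H)=\la_2(B_H)^2-d$, while Lemma \ref{OpR} supplies the matching identity $\rho(\T_{B_H})^2=\rho(Z^*Z)=d+\rho(\T_H)$ for the universal covers; combining these with (\ref{Green}) finishes the proof in a few lines. If you want to salvage your approach, either carry out the test-vector construction in full, or reroute through $B_H$ as above, where the hard analytic content is already packaged in the graph case.
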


The following theorem can also be derived from Marcus, Spielman and Srivastava \cite{MSS}; see Remark \ref{Conc}. To keep consistence of content, we prove it using the language of hypergraph coverings.
% rather than the bipartite graph representation of a hypergraph.

\begin{thm}\label{onesR}
Every connected $(d,r)$-regular hypergraph has a right-sided Ramanujan $2$-covering.
\end{thm}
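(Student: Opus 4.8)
The plan is to run the interlacing-families-of-polynomials method of Marcus, Spielman and Srivastava \cite{MSS}, transplanted to incidence-signed hypergraphs through Theorem \ref{inclusion}. Fix a connected $(d,r)$-regular hypergraph $H$ with vertex set $V$, $|V|=n$, and edge set $E$, $|E|=m$ (so $dn=rm$), and let $B_H$ be its bipartite incidence graph, the simple graph on $V\sqcup E$ joining $v$ to $e$ exactly when $v\in e$; then $B_H$ is $(d,r)$-biregular. For a sign function $s$ on the incidences of $H$ (equivalently, on $E(B_H)$) let $N_s$ be the $V\times E$ signed incidence matrix, $(N_s)_{v,e}=s(v,e)$ if $v\in e$ and $0$ otherwise. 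From the definition of the incidence-signed adjacency matrix one computes $A(H_s)=N_sN_s^{\top}-dI_n$, so the new eigenvalues of the $2$-cover $\b H$ attached to $s$ (those supplied by $H_s$ in Theorem \ref{inclusion}) are exactly the numbers $\sigma^2-d$ with $\sigma$ a singular value of $N_s$, together with the obvious eigenvalue $-d$ of multiplicity $\max\{0,n-m\}$. Writing $\widetilde A_s=\begin{pmatrix}0&N_s\\ N_s^{\top}&0\end{pmatrix}$ for the signed adjacency matrix of $B_H$, whose spectrum is symmetric about $0$, the largest new eigenvalue of $\b H$ equals $\rho(\widetilde A_s)^2-d$. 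Since $(\sqrt{d-1}+\sqrt{r-1})^2-d=r-2+2\sqrt{(d-1)(r-1)}$, the cover $\b H$ is a right-sided Ramanujan $2$-covering of $H$ (Definition \ref{RamaCov}) precisely when $\rho(\widetilde A_s)\le\sqrt{d-1}+\sqrt{r-1}$. It therefore suffices to exhibit one $s$ with this property.

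Consider the family of characteristic polynomials $q_s(x)=\det\!\big(xI_{n+m}-\widetilde A_s\big)$, indexed by $s\in\{\pm1\}^{E(B_H)}$, each monic of degree $n+m$. Flipping a single incidence sign alters $\widetilde A_s$ by a symmetric rank-two perturbation, and exactly as in \cite{MSS} one checks that every conditional-expectation polynomial obtained by fixing a prefix of the signs and averaging uniformly over the rest is real-rooted; this real-rootedness along the binary decision tree is what makes $\{q_s\}_s$ an interlacing family. Consequently there is a sign function $s^{\ast}$ for which the largest root of $q_{s^{\ast}}$ does not exceed the largest root of the averaged polynomial $\E_s\big[q_s(x)\big]$, the expectation over $s$ uniform on $\{\pm1\}^{E(B_H)}$.

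The averaged polynomial is the matching polynomial of $B_H$. Expanding $\det(xI_{n+m}-\widetilde A_s)$ over permutations and taking expectations, a cycle of length at least $3$ carries an independent sign of mean zero and drops out, while fixed points and transpositions along edges of $B_H$ survive; assembling the survivors yields $\E_s[q_s(x)]=\mu_{B_H}(x)=\sum_{k\ge0}(-1)^k m_k(B_H)\,x^{\,n+m-2k}$, where $m_k(B_H)$ is the number of $k$-matchings of $B_H$. By the Heilmann--Lieb theorem $\mu_{B_H}$ is real-rooted, and because $B_H$ is $(d,r)$-biregular its largest root is at most $\sqrt{d-1}+\sqrt{r-1}$: it is classical that the path tree of a graph embeds (as a rooted subtree) in its universal cover and that $\mu_{B_H}$ divides the characteristic polynomial of this path tree, so the largest root of $\mu_{B_H}$ is bounded by $\rho(\T_{d,r})=\sqrt{d-1}+\sqrt{r-1}$ (the value recorded above via \cite{LS}). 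Hence the largest root of $q_{s^{\ast}}$, which equals $\rho(\widetilde A_{s^{\ast}})$ by the sign-symmetry of $\spec(\widetilde A_{s^{\ast}})$, is at most $\sqrt{d-1}+\sqrt{r-1}$, and the $2$-cover $\b H$ associated with $s^{\ast}$ has every new eigenvalue at most $r-2+2\sqrt{(d-1)(r-1)}$; that is, it is a right-sided Ramanujan $2$-covering of $H$.

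The only genuinely non-routine ingredient is the interlacing-family step: showing that flipping a single incidence sign produces partial-average polynomials sharing a common interlacer, and that this propagates through the decision tree so that some member of the family lies below the average. This is precisely the engine of \cite{MSS}, and I would import their two lemmas --- real-rootedness of all the conditional-expectation (\emph{generalized matching}) polynomials, and the principle that a common interlacer forces a member of the family to have largest root at most that of the average --- essentially verbatim. Everything hypergraph-specific is confined to the first paragraph (the identity $A(H_s)=N_sN_s^{\top}-dI_n$ and the resulting dictionary between the new eigenvalues of $\b H$ and $\spec(\widetilde A_s)$) and to the biregular matching-polynomial bound $\sqrt{d-1}+\sqrt{r-1}$.
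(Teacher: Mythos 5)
Your proposal is correct and follows essentially the same route as the paper: pass to the $(d,r)$-biregular incidence graph $B_H$, apply the Marcus--Spielman--Srivastava interlacing-family argument together with the Godsil--Gutman identity to find a signing of $B_H$ whose largest eigenvalue is at most $\rho(\T_{d,r})=\sqrt{d-1}+\sqrt{r-1}$, and translate back via $Q(H,\phi)=Z(H,\phi)Z(H,\phi)^{\top}=dI+A(H,\phi)$ and the spectral-union theorem for $2$-covers. The paper merely packages the MSS machinery into Corollary \ref{Match3} and Lemma \ref{Extsgn} rather than unrolling it as you do.
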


By Theorem \ref{onesR}, starting from a complete $d$-uniform hypergraph on $d+1$ vertices or an affine plane $AG(2,q)$, we prove the existence of two classes of right-sided Ramanujan hypergraphs.

\begin{thm}\label{2RR}
There exists an infinite family of $(d,d)$-regular (respectively, $(d+1,d)$-regular) right-sided Ramanujan hypergraphs for every $d\ge3$ (respectively, for every prime power $d$).
\end{thm}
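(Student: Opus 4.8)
The plan is to realize each family as an infinite tower of \emph{connected} right-sided Ramanujan $2$-coverings built on top of an explicit base hypergraph, applying Theorem~\ref{onesR} at every stage.

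First I would fix the base hypergraphs and verify that they are right-sided Ramanujan. For the $(d,d)$-regular family, let $H_0$ be the complete $d$-uniform hypergraph on $d+1$ vertices, whose edges are all $d$-subsets of $[d+1]$; it is connected, $d$-uniform, each vertex lies in exactly $d$ edges, and each pair of vertices lies in exactly $d-1$ common edges, so $A(H_0)=(d-1)(J-I)$ on the $d+1$ vertices, with the simple eigenvalue $d(d-1)$ and the eigenvalue $-(d-1)$ of multiplicity $d$. For the $(d+1,d)$-regular family with $d$ a prime power, let $H_0$ be the point--line incidence hypergraph of the affine plane $AG(2,d)$: the $d^{2}$ points are the vertices, the $d^{2}+d$ lines (each of size $d$) are the edges, each point lies on $d+1$ lines, and two distinct points lie on a unique common line, so $A(H_0)=J-I$ on the $d^{2}$ vertices, with the simple eigenvalue $d^{2}-1$ and the eigenvalue $-1$ of multiplicity $d^{2}-1$. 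In both cases $H_0$ is connected $(d,r)$-regular, its trivial eigenvalue is $d(r-1)$ (namely $d(d-1)$, respectively $(d+1)(d-1)$), its only non-trivial eigenvalue is negative and hence $\le r-2+2\sqrt{(d-1)(r-1)}$, and there are no obvious eigenvalues since $d\ge r$; so $H_0$ is a right-sided Ramanujan hypergraph.

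Next I would set up the inductive step. Assume $H$ is a connected $(d,r)$-regular right-sided Ramanujan hypergraph. By Theorem~\ref{onesR}, $H$ admits a right-sided Ramanujan $2$-covering $\bar H$, and $\bar H$ is again $(d,r)$-regular since a covering preserves edge sizes and vertex degrees. By Theorem~\ref{inclusion}, the spectrum of $\bar H$ is the multiset union of the old eigenvalues, which form $\spec(H)$, and the new eigenvalues, which form the spectrum of an incidence-signed hypergraph over $H$. The non-trivial old eigenvalues obey (\ref{RamaHL}) by the induction hypothesis, the trivial eigenvalue $d(r-1)$ of $\bar H$ is old, and all new eigenvalues obey (\ref{RamaHL}) by Definition~\ref{RamaCov}; hence $\bar H$ is a right-sided Ramanujan hypergraph. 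To see that $\bar H$ may be taken connected, observe that the trivial $2$-cover $H\sqcup H$ has new spectrum equal to $\spec(H)$, so it contains $d(r-1)$, and
\[
d(r-1)-(r-2)-2\sqrt{(d-1)(r-1)}=\bigl(\sqrt{(d-1)(r-1)}-1\bigr)^{2}>0
\]
because $(d-1)(r-1)\ne1$ in both families (this is exactly where $d\ge3$ enters for the first family). Thus the trivial $2$-cover is not right-sided Ramanujan, so the covering supplied by Theorem~\ref{onesR} corresponds to a non-trivial incidence-signing, which over a connected base hypergraph yields a connected $2$-cover. Iterating from $H_0$ then produces connected $(d,r)$-regular right-sided Ramanujan hypergraphs $H_0,H_1,H_2,\dots$ with $|V(H_i)|=2^{\,i}\,|V(H_0)|$, which are pairwise non-isomorphic, establishing both infinite families.

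The one step I expect to require genuine care is the connectivity claim: the dichotomy ``non-trivial incidence-signing $\Leftrightarrow$ connected $2$-cover'' is the hypergraph analogue of the switching / $\mathbb{Z}_2$-cohomology picture of $2$-lifts of graphs, and should follow from the description of $2$-coverings underlying Theorem~\ref{inclusion}. Everything else is bookkeeping together with the two elementary spectral computations for $H_0$; it is precisely these base computations — existence of the affine plane $AG(2,d)$, and $d\ge3$ forcing $(d-1)(r-1)\ne1$ in the first family — that pin down the arithmetic hypotheses in the statement.
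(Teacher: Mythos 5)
Your proposal is correct and follows essentially the same route as the paper: verify that $K^d_{d+1}$ and $AG(2,d)$ are right-sided Ramanujan (the paper's Example \ref{exm}) and then iterate Theorem \ref{onesR} together with the spectral union of Theorem \ref{inclusion} to build an infinite tower of right-sided Ramanujan $2$-coverings. You additionally supply the connectivity argument (a disconnected $2$-cover of connected $H$ must be $H\sqcup H$, whose new spectrum contains the trivial eigenvalue $d(r-1)>r-2+2\sqrt{(d-1)(r-1)}$), a point the paper's very terse proof leaves implicit.
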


By the matching polynomial of the incidence graph of a hypergraph, we give a sufficient condition for the existence of left-sided Ramanujan $2$-covering.
%As shown by Lemma \ref{mutau}, $\mu_\tau>0$, and by the symmetry of the roots of $\mu_{B_H}(x)$, it has nonzero roots $\pm \mu_1,\ldots, \pm\mu_\tau$ and zero roots with multiplicity $\nu(H)+e(H)-2 \tau$.
Combing Lemma \ref{mutau}, Theorem \ref{onesL} implies that if $\mu_{B_H}(x)$ has no roots $\mu$ with $0 < |\mu| < \left|\sqrt{d-1}-\sqrt{r-1}\right|$, then $H$ has a left-sided Ramanujan $2$-covering.

\begin{thm}\label{onesL}
Let $H$ be a $(d,r)$-regular hypergraph, and $\mu_{\tau}(B_H)$ be the $\tau_H$-th largest root of the matching polynomial $\mu_{B_H}(x)$ of the incidence graph $B_H$ of $H$, where $\tau_H=\min\{\nu(H),e(H)\}$.
If \begin{equation}\label{conj}
\mu_{\tau_H}(B_H)\ge \left|\sqrt{d-1}-\sqrt{r-1}\right|,
\end{equation}
then  $H$ has a left-sided Ramanujan $2$-covering.
\end{thm}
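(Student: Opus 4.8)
The plan is to follow the interlacing-family method of Marcus–Spielman–Srivastava, adapted to the incidence-signed setting used in Theorem \ref{inclusion}. A $2$-cover of $H$ is encoded by a signing $s: E(B_H)\to\{\pm1\}$ of the incidence graph $B_H$, and by Theorem \ref{inclusion} the new eigenvalues of the corresponding $2$-cover $\bar H_s$ are the eigenvalues of the incidence-signed hypergraph; equivalently they are read off from the signed adjacency matrix $A_s(B_H)$ of the bipartite incidence graph, whose nonzero eigenvalues come in pairs $\pm\sigma$ and whose squares relate the left-side spectrum to the right-side spectrum (this is where Lemma \ref{mutau} and the parameter $\tau_H=\min\{\nu(H),e(H)\}$ enter). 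Concretely, I would first reduce the statement ``$H$ has a left-sided Ramanujan $2$-covering'' to the statement that there exists a signing $s$ for which the $\tau_H$-th largest eigenvalue of $A_s(B_H)$ is at most $\sqrt{d-1}+\sqrt{r-1}$, after carefully accounting for the obvious eigenvalues $-d$ (present when $d<r$) and the trivial eigenvalue; this bookkeeping about which eigenvalue index corresponds to the ``first non-obvious new eigenvalue'' is the first place care is needed.

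Next I would invoke the interlacing-family framework. Consider the $2^{e(B_H)}$ signings $s$, each giving a characteristic polynomial $\chi_s(x):=\det(xI-A_s(B_H))$, and form the uniform average $\mathbb{E}_s\,\chi_s(x)$. The two key facts to establish are: (i) this family of polynomials is an interlacing family, so that some individual $\chi_s$ has its $\tau_H$-th largest root at most the $\tau_H$-th largest root of the average; and (ii) the average $\mathbb{E}_s\,\chi_s(x)$ equals (up to the factor coming from $|V(H)|-|E(H)|$ zero/\,$-d$ rows) the matching polynomial $\mu_{B_H}(x)$ of the incidence graph $B_H$. Fact (ii) is the analogue of the classical identity that the expected characteristic polynomial of a random signing of a graph is its matching polynomial; since $B_H$ is a genuine (bipartite) graph, I expect this to go through verbatim from the graph case — the point being that expanding $\det(xI-A_s)$ and averaging over signs kills every non-trivial cycle contribution, leaving only contributions from partial matchings of $B_H$. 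Fact (i) is the standard rank-one-update argument: fixing all signs but one and averaging the last sign expresses $\mathbb{E}$ as a convex combination of two real-rooted polynomials obtained from rank-one perturbations, whose roots interlace; then one induces on the number of free signs, using that a nonnegative combination of polynomials with a common interlacer is real-rooted with a controlled largest root.

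The conclusion then follows by combining these pieces with the hypothesis (\ref{conj}). The matching polynomial $\mu_{B_H}(x)$ is real-rooted (Heilmann–Lieb) and symmetric about $0$ since $B_H$ is bipartite, and its largest root is at most the spectral radius $\rho(\T_{B_H})=\sqrt{d-1}+\sqrt{r-1}$ of the universal cover of the $(d,r)$-biregular graph $B_H$. Therefore the $\tau_H$-th largest root of $\mathbb{E}_s\,\chi_s$ is at most $\sqrt{d-1}+\sqrt{r-1}$, hence so is the $\tau_H$-th largest root of some $\chi_s$; squaring and translating by $r-2$, and using Lemma \ref{mutau} together with hypothesis (\ref{conj}) to control the eigenvalue below $r-2$ (the roots $\mu$ with $|\mu|\ge|\sqrt{d-1}-\sqrt{r-1}|$ are exactly what guarantees no new eigenvalue drops below $r-2-2\sqrt{(d-1)(r-1)}$), one gets that every non-obvious new eigenvalue of $\bar H_s$ satisfies (\ref{RamaHR}). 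The main obstacle I anticipate is not any single step in isolation but the translation layer: getting the correct correspondence between (a) eigenvalues of the signed incidence graph $A_s(B_H)$, (b) new eigenvalues of the $2$-cover $\bar H_s$ of the hypergraph as described by Theorem \ref{inclusion}, and (c) which of these are ``obvious'' and must be excluded, so that the single threshold $\mu_{\tau_H}(B_H)$ in (\ref{conj}) is precisely the quantity that controls the left-sided Ramanujan property. Once that dictionary is pinned down, the interlacing-family argument is essentially the Marcus–Spielman–Srivastava argument applied to the bipartite graph $B_H$.
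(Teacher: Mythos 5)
Your overall architecture --- encode $2$-covers by signings of $B_H$, average the characteristic polynomials to get $\mu_{B_H}(x)$ via the Godsil--Gutman identity, use the interlacing-family property to pass from the average to a single signing, then square-and-shift to read off the new hypergraph eigenvalues --- is exactly the paper's route (the paper packages the first three steps as Corollary \ref{Match3} and finishes with a three-line computation). However, the key selection step in your write-up points in the wrong direction, and as written the argument does not prove the theorem. A left-sided Ramanujan covering requires a \emph{lower} bound on the non-obvious new eigenvalues: $\la_{\tau_H}(A(H,\phi))=\la_{\tau_H}(A(B_H,\phi))^2-d\ge r-2-2\sqrt{(d-1)(r-1)}$, which means you need a signing with $\la_{\tau_H}(A(B_H,\phi))\ge\left|\sqrt{d-1}-\sqrt{r-1}\right|$, i.e.\ the $\tau_H$-th largest eigenvalue of the signed incidence graph bounded away from zero \emph{from below}. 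You instead reduce to finding a signing whose $\tau_H$-th largest eigenvalue is \emph{at most} $\sqrt{d-1}+\sqrt{r-1}$ and invoke the ``at most'' half of the interlacing conclusion; squaring such an upper bound only reproduces the right-sided inequality $\la\le r-2+2\sqrt{(d-1)(r-1)}$ and says nothing about the left side. Worse, the signing produced by the ``at most'' direction could a priori have $\la_{\tau_H}(A(B_H,\phi))$ strictly smaller than $\left|\sqrt{d-1}-\sqrt{r-1}\right|$ (even zero beyond the forced multiplicity $|\nu(H)-e(H)|$), creating a non-obvious new eigenvalue below $r-2-2\sqrt{(d-1)(r-1)}$.

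The fix is essentially one word: use the ``at least'' half of Lemma \ref{Match2} / Corollary \ref{Match3} (which the paper states with the parenthetical ``(at least)'' precisely for this purpose) to obtain a signing $\phi$ with $\la_{\tau_H}(A(B_H,\phi))\ge\mu_{\tau_H}(B_H)$; hypothesis (\ref{conj}) then gives $\la_{\tau_H}(A(B_H,\phi))\ge\left|\sqrt{d-1}-\sqrt{r-1}\right|$ directly, and squaring and subtracting $d$ yields (\ref{lowLR}). Your closing remark that (\ref{conj}) ``controls the eigenvalue below $r-2$'' gestures at the right idea, but (\ref{conj}) is a statement about the roots of $\mu_{B_H}$, and the only bridge from those roots to the spectrum of a specific signing is the ``at least'' interlacing inequality, which your proposal never invokes. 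The upper bound $\mu_1(B_H)\le\rho(\T_{B_H})=\sqrt{d-1}+\sqrt{r-1}$ that you emphasize is what the paper uses for the \emph{right}-sided Theorem \ref{onesR}; it plays no role here.
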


Lastly, by construction from affine planes, we proved the existence of a class of left-sided Ramanujan hypergraphs.

\begin{thm}\label{LR-const}
For any prime power $q$ greater than $4$, there exist infinitely many $(q+1,q)$-regular left-sided Ramanujan hypergraphs.
\end{thm}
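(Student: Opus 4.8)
The plan is to start from a concrete infinite tower of $(q+1,q)$-regular hypergraphs that come from affine planes, and then apply Theorem \ref{onesL} repeatedly to pass to $2$-covers that remain left-sided Ramanujan. First I would fix the seed: for a prime power $q>4$, take the hypergraph $H_0$ whose vertices are the points of the affine plane $AG(2,q)$ and whose edges are the lines of one parallel class together with a suitable incidence structure so that $H_0$ is $(q+1,q)$-regular and connected; the point here is that $d=q+1$, $r=q$, so $\sqrt{d-1}-\sqrt{r-1}=\sqrt{q}-\sqrt{q-1}$, a small positive quantity. I would verify directly (using the known spectrum of the incidence structure of $AG(2,q)$, which is governed by the eigenvalues of the point–line incidence matrix and hence by $\pm\sqrt{q}$-type quantities) that $H_0$ is itself left-sided Ramanujan, i.e. every non-obvious eigenvalue is at least $r-2-2\sqrt{(d-1)(r-1)}=q-2-2\sqrt{q(q-1)}$.

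The inductive step is the heart of the argument. Suppose $H_i$ is a connected $(q+1,q)$-regular left-sided Ramanujan hypergraph. By Theorem \ref{onesL}, to produce a left-sided Ramanujan $2$-cover $H_{i+1}$ it suffices to check the hypothesis \eqref{conj}, namely
\[
\mu_{\tau_{H_i}}(B_{H_i}) \ge \left|\sqrt{q}-\sqrt{q-1}\right|,
\]
where $\mu_{\tau}(B_{H_i})$ is the $\tau_{H_i}$-th largest root of the matching polynomial of the incidence graph $B_{H_i}$, and $\tau_{H_i}=\min\{\nu(H_i),e(H_i)\}$. I would control this root using the structure of $B_{H_i}$: the incidence graph is $(q+1,q)$-biregular and bipartite, so its matching polynomial has its roots bounded away from $0$ in a way that can be estimated via the universal (tree) bound $\sqrt{d-1}+\sqrt{r-1}=\sqrt{q}+\sqrt{q-1}$ for the largest root, combined with a lower bound on the $\tau$-th root coming from the fact that a $(d,r)$-biregular bipartite graph on our vertex set has a near-perfect matching on the smaller side, so the relevant root $\mu_{\tau}$ is comparable to the smallest positive root, which for $q>4$ exceeds the tiny threshold $\sqrt{q}-\sqrt{q-1}=1/(\sqrt{q}+\sqrt{q-1})$. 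Here is where I would invoke Lemma \ref{mutau} (referenced in the excerpt just before Theorem \ref{onesL}) to translate between $\mu_{B_H}(x)$ and the relevant spectral quantities, reducing \eqref{conj} to an inequality that holds for all $q\ge 5$ but may fail for $q\le 4$ — which explains the hypothesis. Once \eqref{conj} is verified, Theorem \ref{onesL} gives a left-sided Ramanujan $2$-cover $H_{i+1}$, and since old eigenvalues of $H_{i+1}$ coincide with eigenvalues of $H_i$ (Theorem \ref{inclusion}) while new eigenvalues satisfy \eqref{RamaHR}, the cover $H_{i+1}$ is again left-sided Ramanujan and again connected and $(q+1,q)$-regular, so the induction continues.

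Iterating gives an infinite tower $H_0, H_1, H_2, \dots$ of connected $(q+1,q)$-regular left-sided Ramanujan hypergraphs with strictly increasing number of vertices, proving the theorem. The main obstacle I anticipate is the matching-polynomial estimate in the inductive step: one must show that the $\tau_{H_i}$-th largest root of $\mu_{B_{H_i}}(x)$ does not drift down to $0$ as the tower grows. The key observation that makes this tractable is that $2$-covering does not change the biregularity type of the incidence graph, and the matching polynomial of a $2$-cover is controlled by (a product/combination of) matching polynomials of the base and of the associated signing, so the relevant root stays uniformly bounded below by a quantity depending only on $q$; establishing this uniformity — rather than any single spectral computation — is the delicate part. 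A secondary, more bookkeeping-style obstacle is pinning down the seed hypergraph from $AG(2,q)$ precisely enough to compute $\nu(H_0)$, $e(H_0)$, and the non-obvious spectrum, but this is a finite computation with classical incidence geometry and should go through cleanly for $q>4$.
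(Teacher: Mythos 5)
Your overall strategy---seed with $AG(2,q)$ and iterate left-sided Ramanujan $2$-coverings via Theorem \ref{onesL}---founders on the inductive step, and the obstacle you flag as ``delicate'' is in fact the open Problem~1 of the paper. Theorem \ref{onesL} only applies to $H_i$ if the hypothesis \eqref{conj}, $\mu_{\tau_{H_i}}(B_{H_i})\ge\sqrt{q}-\sqrt{q-1}$, is verified, and Lemma \ref{mutau} gives only $0<\mu_{\tau_G}(G)\le\max\{\sqrt d,\sqrt r\}$ --- strict positivity with no quantitative lower bound, so it cannot ``translate'' \eqref{conj} into anything checkable. Your proposed mechanism for uniformity along the tower --- that the matching polynomial of a $2$-cover is ``a product/combination of matching polynomials of the base and of the associated signing'' --- does not exist: the Godsil--Gutman identity (Theorem \ref{Match1}) relates the \emph{expected characteristic polynomial over signings} of the base to the matching polynomial \emph{of the base}, and says nothing about the matching polynomial of the cover. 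Even the base case is not the ``finite computation with classical incidence geometry'' you describe, since \eqref{conj} concerns the roots of $\mu_{B_{H_0}}(x)$, not the adjacency eigenvalues of $B_{H_0}$; these differ once $B_{H_0}$ has cycles. (There is also a minor error in your seed: taking the lines of one parallel class of $AG(2,q)$ gives each point degree $1$, not $q+1$; one must take all lines.)

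The paper avoids Theorem \ref{onesL} entirely. It fixes $H=AG(2,q)$ and, for each $k$, takes an explicit Abelian $k$-lift of $B_H$ whose voltage is the cycle $(12\cdots k)$ on a single arc $(v_0,e_0)$ and trivial elsewhere. By the Mohar--Tayfeh-Rezaie decomposition (Lemma \ref{Mspec}), the spectrum of the lift is the union of spectra of matrices $A_{x_j}=A(B_H)+B_j$, where $B_j$ is a rank-two perturbation of spectral norm $\sqrt{2(1-\cos(2\pi j/k))}\le 2$. Since the spectrum of $A(B_H)$ is known exactly ($\pm\sqrt{q^2+q}$, $\pm\sqrt q$ with multiplicity $q^2-1$, and $0$ with multiplicity $q$), Weyl's inequality gives $\la_{q^2}(A_{x_j})\ge\sqrt q-2\ge\sqrt q-\sqrt{q-1}$, the last step being exactly where $q>4$ is used; squaring and subtracting $q+1$ yields the left-sided Ramanujan bound for every $k$. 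This produces infinitely many examples as explicit $k$-covers of a single base hypergraph, with no interlacing families and no matching polynomials. If you want to salvage your tower approach, you would first have to resolve \eqref{conj} for the covers arising at each stage, which the authors themselves leave open.
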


Brito et al. \cite{BDH} proved the spectral gaps for $\la_2(B_H)$ and
$\la_{\tau}(B_H)$ which implies almost surely all $(d,r)$-regular hypergraphs are nearly right-sided or left-sided Ramanujan.
The remaining part of the paper is as follows.
 In Section 2, we give some notions and results for later discussion.
 In Section 3 we prove the spectral inclusion or union property between a covering hypergraph and its underlying hypergraph.
In Section 4, we present a lower bound for the second largest eigenvalue of a hypergraph by universal cover.
In the last section, we use Ramanujan coverings to prove the existence of some families of right-sided or left-sided Ramanujan hypergraphs.

\section{preliminaries}

\subsection{Hypergraphs and matrices}\label{Sec-HM}
Let $H$ be a hypergraph.
Denote by $\nu(H)$ the number of vertices of $H$ and $e(H)$ the number of edges of $H$.
A {\it walk} $W$ of length $l$ in $H$ is a sequence of alternate vertices and edges: $v_{0}e_{1}v_{1}e_{2}\cdots e_{l}v_{l}$,
    where $v_{i} \ne v_{i+1}$ and $\{v_{i},v_{i+1}\}\subseteq e_{i+1}$ for $i=0,1,\ldots,l-1$.
The walk $W$ is called a \emph{path} if no vertices or edges are repeated in the sequence, a \emph{cycle} if $v_0=v_l$ and no vertices or edges are repeated in the sequence except $v_0=v_l$, and is \emph{non-backtracking} if $e_{i} \ne e_{i+1}$ for all $i=1,\ldots,l-1$.
In the case of $H$ being a simple graph, we simply write $W: v_0 v_1 \cdots v_l$ as each edge contains exactly two vertices.
The hypergraph $H$ is said to be  \emph{connected} if any two vertices are connected by a walk, and is called a \emph{hypertree} if it is connected and contains no cycles.

The \emph{dual hypergraph} $H^*$ of $H$ is the hypergraph with vertex set $E(H)$ and edge set $\{\{e \in E(H): v \in e\}: v \in V(H)\}$.
The \emph{incident (bipartite) graph} of $H$, denoted by $B_H$, is a bipartite graph with vertex set $V(H) \cup E(H)$ and edge set $\{\{v,e\}: v\in V(H), e \in E(H), v \in e\}$.
If $H$ is a simple graph, then $B_H$ is exactly the subdivision of $H$ (namely, the graph obtained by inserting an additional vertex into each edge of $H$).

The \emph{incident matrix} of $H$ is denoted and defined by $Z(H)=(z_{ve})$ such that $z_{ve}=1$ if $v \in e$ and $0$ else.
Then we have the adjacency matrix of $B_H$ as follows:
$$ A(B_H)=\left(\begin{array}{cc} O & Z(H) \\ Z(H)^\top & O \end{array} \right).$$
The \emph{(signless) Laplacian} of $H$, denoted by $Q(H)$, is defined by
\[ Q(H)=Z(H)Z(H)^\top=D(H)+A(H),
\]
where $D(H)=\diag \{d_v: v \in V(H)\}$, the degree matrix of $H$.
It is known that $Z_{H^*}=Z(H)^\top$, and $B_{H^*}$ is isomorphic to $B_H$.
We also have
$$
A(B_H)^2=\left(\begin{array}{cc} Z(H)Z(H)^\top & O \\
O & Z(H)^\top Z(H)\end{array} \right)=
\left(\begin{array}{cc} Q(H) & O \\ O & Q(H^*) \end{array} \right).$$

Let $\psi_A(\la)$ denote the characteristic polynomials of square matrix $A$.
Then
\begin{equation}\label{polyR} \la^{\nu(H)} \psi_{A(B_H)}(\la)= \la^{e(H)} \psi_{Q(H)}(\la^2),
~ \la^{e(H)} \psi_{Q(H)}(\la)=\la^{\nu(H)} \psi_{Q(H^*)}(\la).
\end{equation}
So, the spectrum of one of $A(B_H), Q(H)$ and $Q(H^*)$ yields the spectra of the other two.
In particular, if $H$ is $(d,r)$-regular, then
$Q(H)=dI + A(H)$ and $Q(H^*)=rI+A(H^*)$.
In this case, the eigenvalues of $A(B_H)$ are the square roots of the eigenvalues of $A(H)$ plus $d$, and the eigenvalues of $A(H)$ is at least $-d$ as $Q(H)$ is positive semidefinite.
If $\nu(H)>e(H)$, $A(H)$ has the \emph{obvious eigenvalue} $-d$ with multiplicity $\nu(H)-e(H)$.
Also, $A(B_H)$ has \emph{obvious eigenvalue} $0$ with multiplicity $|\nu(H)-e(H)|$ if $\nu(H) \ne e(H)$.

\subsection{Finite covering and universal covering}
Gross and Tucker \cite{GT} applied the permutation voltage graphs to characterize the finite coverings of simple graphs. Let $D$ be digraph possibly with multiple arcs, and let $\mathbb{S}_k$ be the symmetric group on the set $[k]$.
 Let $\phi:E(D)\to\mathbb{S}_k$, which assigns a permutation to each arc of $D$.
 The pair $(D, \phi)$ is called a \emph{permutation voltage digraph}.
 A \emph{derived digraph} $D^\phi$ associated with $(D, \phi)$ is a digraph with vertex set $V(D)\times[k]$ such that $((u, i),(v, j))$ is an arc of $D^\phi$ if and only if $(u, v)\in E(D)$ and $i=\phi(u, v)(j)$.

Let $G$ be a simple graph, and let $\overleftrightarrow{G}$ denote the symmetric digraph obtained from $G$ by replacing each edge $\{u, v\}$ by two arcs with opposite directions, written as $e=(u, v)$ and $e^{-1}:= (v, u)$ respectively.
Let $\phi: E(\overleftrightarrow{G})\to\mathbb{S}_k$ be a permutation assignment of $\overleftrightarrow{G}$ which holds that $\phi(e)^{-1}=\phi(e^{-1})$ for each arc $e$ of $\overleftrightarrow{G}$.
The pair $(G, \phi)$ is called a \emph{permutation voltage graph}.
The derived digraph $\overleftrightarrow{G}^\phi$, simply written as $G^\phi$, has symmetric arcs by definition, and is considered as a simple graph.
%Gross and Tucker \cite{GT} established a relationship between $k$-fold coverings and derived graphs.

\begin{lem}\cite{GT}\label{per}
Let $G$ be a connected graph and let $\bar{G}$ be a $k$-cover of $G$. Then there exists an assignment $\phi$ of permutations in $\mathbb{S}_k$ on $G$ such that $G^\phi$ is isomorphic to $\bar{G}$.
\end{lem}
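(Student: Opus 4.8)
The plan is to read the permutation voltage data directly off the covering projection $\varpi:\bar G\to G$. Concretely, I would: (i) label each fibre of $\varpi$ by $[k]$ in an arbitrary way; (ii) for each arc of $\overleftrightarrow G$, record the permutation of $[k]$ that the edges of $\bar G$ lying above that arc induce between the two fibres; and (iii) check that the labelling realizes an isomorphism $\bar G\cong G^{\phi}$. Two preliminaries first: since $G$ is connected, the paragraph preceding Lemma~\ref{per} gives a common value $k$ with $|\varpi^{-1}(v)|=k$ for all $v\in V(G)$, so we may fix bijections $\ell_v:\varpi^{-1}(v)\to[k]$; and under the standing assumption on $\varpi$, each edge of $\bar G$ has its two endpoints in two \emph{distinct} fibres, while two edges of $\bar G$ joining the same pair $\bar u,\bar v$ would both lie above the single edge $\{\varpi(\bar u),\varpi(\bar v)\}$ of $G$ and both be incident with $\bar u$, contradicting injectivity of $\tilde\varpi|_{E_{\bar G}(\bar u)}$. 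Hence $\bar G$ is a simple loopless graph, as is $G^{\phi}$.

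The core step is the construction of $\phi$. Fix an arc $e=(u,v)$ of $\overleftrightarrow G$, so $\{u,v\}\in E(G)$. For $\bar u\in\varpi^{-1}(u)$, Definition~\ref{cov-def} makes $\tilde\varpi|_{E_{\bar G}(\bar u)}:E_{\bar G}(\bar u)\to E_G(u)$ a bijection, so exactly one edge of $\bar G$ above $\{u,v\}$ is incident with $\bar u$; let $m_e(\bar u)\in\varpi^{-1}(v)$ be its other endpoint. I would then check that $m_e:\varpi^{-1}(u)\to\varpi^{-1}(v)$ is a bijection and that $m_{e^{-1}}=m_e^{-1}$: the key observation is that $\{\bar u,\bar v\}\in E(\bar G)$ lies above $\{u,v\}$ precisely when $m_e(\bar u)=\bar v$, a condition symmetric in $\bar u,\bar v$, which simultaneously gives surjectivity, injectivity (from uniqueness of the edge above $\{u,v\}$ at $\bar v$), and compatibility with the reversed arc. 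Setting $\phi(e):=\ell_u\circ m_e^{-1}\circ\ell_v^{-1}\in\mathbb{S}_k$ then gives $\phi(e^{-1})=\ell_v\circ m_e\circ\ell_u^{-1}=\phi(e)^{-1}$, so $(G,\phi)$ is a legitimate permutation voltage graph.

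It remains to exhibit the isomorphism. Define $\Phi:V(\bar G)\to V(G)\times[k]$ by $\Phi(\bar v)=\big(\varpi(\bar v),\,\ell_{\varpi(\bar v)}(\bar v)\big)$; it is a bijection because every $\ell_v$ is. For vertices $\bar u,\bar v$ with $u=\varpi(\bar u)$, $v=\varpi(\bar v)$, $i=\ell_u(\bar u)$, $j=\ell_v(\bar v)$, one has the chain of equivalences
$\{\bar u,\bar v\}\in E(\bar G)\iff m_{(u,v)}(\bar u)=\bar v\iff i=\ell_u\!\big(m_{(u,v)}^{-1}(\ell_v^{-1}(j))\big)=\phi(u,v)(j)\iff\{(u,i),(v,j)\}\in E(G^{\phi})$,
where the last equivalence is the defining relation of the derived graph. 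Thus $\Phi$ is edge-preserving in both directions, hence a graph isomorphism, and $G^{\phi}\cong\bar G$.

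I expect the only genuine obstacle to be the middle paragraph: verifying that the purely \emph{local} bijections $\tilde\varpi|_{E_{\bar G}(\bar v)}$ guaranteed by Definition~\ref{cov-def} patch together into globally well-defined, mutually inverse fibre-to-fibre matchings $m_e,m_{e^{-1}}$, since this is exactly what makes $\phi$ well defined and compatible ($\phi(e^{-1})=\phi(e)^{-1}$); the rest is bookkeeping. I would also point out that connectedness of $G$ is used only to guarantee the common fibre size $k$, and that $\bar G$ itself need not be assumed connected.
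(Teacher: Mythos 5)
Your argument is correct: the paper states Lemma~\ref{per} only as a citation to Gross--Tucker \cite{GT} and gives no proof of its own, and what you have written is essentially the original Gross--Tucker construction (fix labellings $\ell_v$ of the fibres, read the fibre-to-fibre matchings $m_e$ off the local bijections $\tilde\varpi|_{E_{\bar G}(\bar u)}$, and verify $\phi(e^{-1})=\phi(e)^{-1}$ and the defining relation $i=\phi(u,v)(j)$ of the derived graph). Your verification of the key points --- that the local bijections force $m_e$ to be a well-defined bijection with $m_{e^{-1}}=m_e^{-1}$, that $\bar G$ is simple so edge-preservation in both directions suffices, and that connectedness is needed only for the common fibre size $k$ --- is accurate and complete.
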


The coverings of a hypergraph can be obtained by permutation assignments via the incident graph of the hypergraph.
 Let $H$ be a connected hypergraph and let $B_H$ be its incident graph.
 Let $\phi: E(\overleftrightarrow{B_H}) \to \S_k$ such that $\phi(v,e)=\phi^{-1}(e,v)$ for each pair $(v,e)$ with $v \in e$.
 By definition, We will get a $k$-covering $B_H^\phi$ of $B_H$.
Note that in the graph $B_H^\phi$, $(e,j)$ is adjacent to the vertices $(u, \phi(u,e)j)$ for all $u \in e$.
Then we will get a $k$-fold covering of $H$ arisen from $B_H^\phi$, denoted by $H_B^\phi$, which has vertex set $V(H) \times [k]$, and edges still denoted by $(e,j):=\{(u, \phi(e,u)j): u \in e\}$ for all $e \in E(H)$ and $j \in [k]$;
see Fig. \ref{FC} for the construction of a $2$-fold covering of a hypergraph.

\begin{figure}
\centering
\includegraphics[scale=.8]{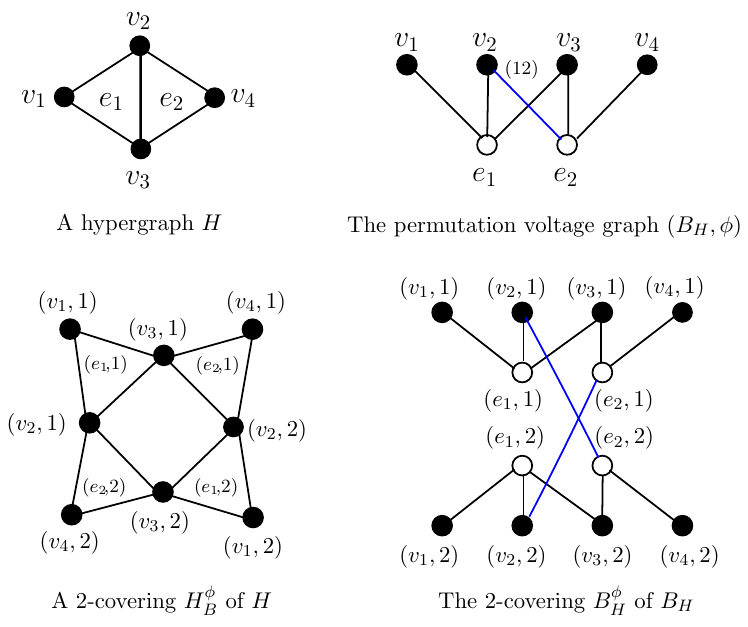}
\caption{\small Construction of a $2$-covering of a hypergraph, where an edge of $H$ or $H^\phi$ is represented by the point set of a triangle, and only the blue edge in $(B_H,\phi)$ is assigned to the permutation $(12)$ and all other edges are assigned to the identity}\label{FC}
\end{figure}

Li and Hou \cite{LH} proved that any $k$-cover $\bar{H}$ of a connected hypergraph $H$ is isomorphic to $H_B^\phi$ for some $\phi$.
Song, Fan et al. \cite{Song} emphasized that there is an isomorphism from $B_H^\phi$ to $B_{\bar{H}}$ which sends $V(H) \times [k]$ to $V(\bar{H})$ and $E(H)\times [k]$ to $E(\bar{H})$, and hence $H_B^\phi$ is isomorphic to $H$.
We give a proof here to simplify the one given in \cite{Song}.

\begin{lem}\cite{LH,Song}\label{hypcov}
Let $\bar{H}$ be a $k$-fold covering of a connected hypergraph $H$. Then there exists a permutation voltage assignment $\phi$ in $\mathbb{S}_k$ on $B_H$ such that $B^\phi_H$ is isomorphic to $B_{\bar{H}}$ by a map which sends $V(H)\times[k]$ to $V(\bar{H})$ and $E(H)\times [k]$ to $E(\bar{H})$, and hence $H^\phi_B$ is isomorphic to $\bar{H}$.
\end{lem}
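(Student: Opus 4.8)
The plan is to reduce the statement to the graph case, namely Lemma \ref{per} of Gross--Tucker, by passing to incidence graphs. The whole point is that a covering projection of hypergraphs, together with the standing assumption on $\varpi$, should induce an honest graph covering between the incidence graphs, and then a voltage assignment realizing the latter is automatically a permutation voltage assignment on $B_H$ in the required sense.

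First I would show that a covering projection $\varpi\colon\bar{H}\to H$ in the sense of Definition \ref{cov-def} induces a covering projection $\beta\colon B_{\bar H}\to B_H$. Define $\beta$ on $V(B_{\bar H})=V(\bar H)\cup E(\bar H)$ by $\beta(\bar v)=\varpi(\bar v)$ for $\bar v\in V(\bar H)$ and $\beta(\bar e)=\tilde{\varpi}(\bar e)$ for $\bar e\in E(\bar H)$. An edge of $B_{\bar H}$ is a pair $\{\bar v,\bar e\}$ with $\bar v\in\bar e$; by the standing assumption $\varpi|_{\bar e}$ is a bijection onto $\varpi(\bar e)=\tilde{\varpi}(\bar e)$, whence $\varpi(\bar v)\in\tilde{\varpi}(\bar e)$, i.e.\ $\{\beta(\bar v),\beta(\bar e)\}\in E(B_H)$, so $\beta$ is a graph homomorphism. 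It is surjective because $\varpi$ is surjective on vertices and, since each edge of $H$ has $k$ edges in its preimage, $\tilde{\varpi}$ is surjective on edges. To see that $\beta$ is a local bijection: the neighbourhood of $\bar v\in V(\bar H)$ in $B_{\bar H}$ is $E_{\bar H}(\bar v)$, and $\tilde{\varpi}|_{E_{\bar H}(\bar v)}\colon E_{\bar H}(\bar v)\to E_H(\varpi(\bar v))$ is a bijection onto the neighbourhood of $\varpi(\bar v)$ in $B_H$ by Definition \ref{cov-def}; the neighbourhood of $\bar e\in E(\bar H)$ in $B_{\bar H}$ is the vertex set of $\bar e$, and $\varpi|_{\bar e}\colon\bar e\to\varpi(\bar e)$ is a bijection onto the neighbourhood of $\tilde{\varpi}(\bar e)$ in $B_H$ by the standing assumption. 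Hence $\beta$ is a covering projection; since $B_H$ is connected ($H$ being connected) and $|\beta^{-1}(v)|=|\varpi^{-1}(v)|=k$ for every $v\in V(H)$, the map $\beta$ is a $k$-fold covering projection, so $B_{\bar H}$ is a $k$-cover of $B_H$.

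Next I would apply Lemma \ref{per} to the connected graph $B_H$ and its $k$-cover $B_{\bar H}$ to obtain a permutation voltage assignment $\phi\colon E(\overleftrightarrow{B_H})\to\mathbb{S}_k$ with $\phi(v,e)=\phi(e,v)^{-1}$, together with an isomorphism $\theta\colon B_H^\phi\to B_{\bar H}$. The step that needs care is that $\theta$ may be taken to be a lift of the identity, i.e.\ to commute with the natural projections $B_H^\phi\to B_H$, $(x,i)\mapsto x$, and $\beta\colon B_{\bar H}\to B_H$; this is exactly how the isomorphism in the Gross--Tucker construction is produced, since there one matches, fibre by fibre, the fibre of $\beta$ over $x$ with $\{x\}\times[k]$. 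Granting this, a vertex $(v,i)$ with $v\in V(H)$ is sent by $\theta$ to a vertex of $B_{\bar H}$ lying over $v$, hence into $V(\bar H)$, and similarly $(e,i)$ with $e\in E(H)$ is sent into $E(\bar H)$. Thus $\theta$ carries $V(H)\times[k]$ onto $V(\bar H)$ and $E(H)\times[k]$ onto $E(\bar H)$. Finally, since the incidence graph of $H_B^\phi$ is $B_H^\phi$ by construction, the incidence graph of $\bar H$ is $B_{\bar H}$, and a hypergraph is recovered from its incidence graph together with the designation of which side is the vertex side, the bipartition-respecting isomorphism $\theta$ yields $H_B^\phi\cong\bar H$.

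The main obstacle is the middle part — checking that the naive incidence-graph map $\beta$ is a genuine graph covering, treating the vertex-type and edge-type vertices of $B_{\bar H}$ separately and invoking Definition \ref{cov-def} for one and the standing bijection assumption for the other — together with the bookkeeping in the last paragraph that the Gross--Tucker isomorphism can be chosen compatibly with the covering projections, which is what forces it to respect the two sides of the bipartition and hence to descend to an isomorphism of hypergraphs.
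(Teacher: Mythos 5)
Your proposal is correct and follows essentially the same route as the paper's proof: pass to incidence graphs, observe that $B_{\bar H}$ is a $k$-cover of $B_H$ (which the paper cites from an earlier work and you verify directly), apply the Gross--Tucker lemma, and use the fact that the resulting isomorphism commutes with the covering projections (the paper's identity $\varpi\eta=p$) to conclude it respects the bipartition and hence descends to $H_B^\phi\cong\bar H$. The only difference is one of detail, not of method.
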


\begin{proof}
As $\bar{H}$ is a $k$-cover of $H$, $B_{\bar{H}}$ is a $k$-cover of $B_H$ via a covering projection $\varpi$ satisfying $\varpi(V(\bar{H}))=V(H)$ and $\varpi(E(\bar{H}))=E(H)$ (or see the first part of the proof of \cite[Lemma 3.3]{Song}).
By Lemma \ref{per}, there  exists a permutation assignment $\phi: E(\overleftrightarrow{B_H}) \to \mathbb{S}_k$ such that $B_H^\phi$ is isomorphic to $B_{\bar{H}}$ via a map $\eta$.
Note that $B_H^\phi$ is a $k$-cover of $B_H$ via a natural covering projection
$p$ such that $p|_{\{v\}\times [k]}=v$ and $p|_{\{e\}\times [k]}=e$ for all $v \in V(H)$ and $e \in E(H)$.
From the proof of Lemma \ref{per} (see \cite[Theorem 2]{GT}),
$ \varpi \eta=p.$
%or equivalently for all $v \in V(H)$ and $e \in E(H)$, $\varpi \
So, for any $(v,i) \in V(H) \times [k]$, $\eta(v,i) \in V(\bar{H})$;
otherwise, if $\eta(v,i) \in E(\bar{H})$, then $\varpi\eta(v,i) \in E(H)$, which yields a contradiction as $\varpi \eta(v,i)=p(v,i)=v \in V(H)$.
Therefore, $\eta$ sends $V(H)\times[k]$ to $V(\bar{H})$, and $E(H)\times [k]$ to $E(\bar{H})$ by a similar discussion.
\end{proof}

The universal cover of a simple graph $G$
is the unique infinite tree $\mathbb{T}_G$ such that every connected covering of $G$ is a quotient of $\mathbb{T}_G$.
To construct the universal cover, fix a ``root'' vertex $v_0$ of $G$, and
then place one vertex in $\mathbb{T}_G$ for each non-backtracking walk  starting at $v_0$ of every length $l \in \mathbb{N}$.
Two vertices are adjacent in $\mathbb{T}_G$ if their lengths differ by $1$ and one is a prefix of another.
The universal cover
of a graph is unique up to isomorphism and is independent of the choice of the root $v_0$.
In particular, if $G$ is a $d$-regular graph, then $\mathbb{T}_G$ is the infinite $d$-ary tree $\T_d$; if $G$ is a $(d,r)$-biregular graph, then $\mathbb{T}_G$ is the infinite $(d,r)$-biregular tree $\T_{d,r}$,
 %in which the degrees alternate between $d$ and $r$ on every other level,
 where a graph is called $(d,r)$-\emph{biregular} if it is bipartite and the vertices in one part of the bipartition have degree $d$ and the vertices on the other part have degree $r$.

The universal cover $\T_H$ of a hypergraph $H$ is a hypertree, which can be constructed as follows (see \cite{LS}).
First we have the universal cover $\T_{B_H}$ of the incident graph $B_H$ of $H$.
Observe that $\T_{B_H}$ contains two kinds of levels: one consisting of the walks of same length ending at vertices of $H$ (called the \emph{vertex levels}) and the other consisting of the walks of same length ending at the edges of $H$ (called the \emph{edge levels}).
Take the vertices in the vertex levels as the vertices of $\T_H$, and for each vertex $e$ in the edge levels of $\T_{B_H}$,  form an edge of $\T_H$ by taking all vertices of $\T_H$  that is adjacent to $e$; see Fig. \ref{UC} for the universal cover of the hypergraph in Fig. \ref{FC}.
So,  $\T_H$ is the bipartite half of $\T_{B_H}$ corresponding to the vertex levels.
If $H$ is a $(d,r)$-regular hypergraph, then $B_H$ is a $(d,r)$-biregular graph the universal cover of which is $\T_{d,r}$, and hence $\T_H$ is the bipartite half of $\T_{d,r}$ corresponding to the vertex levels, denoted by $\D_{d,r}$.

\begin{figure}
\centering
\includegraphics[scale=.8]{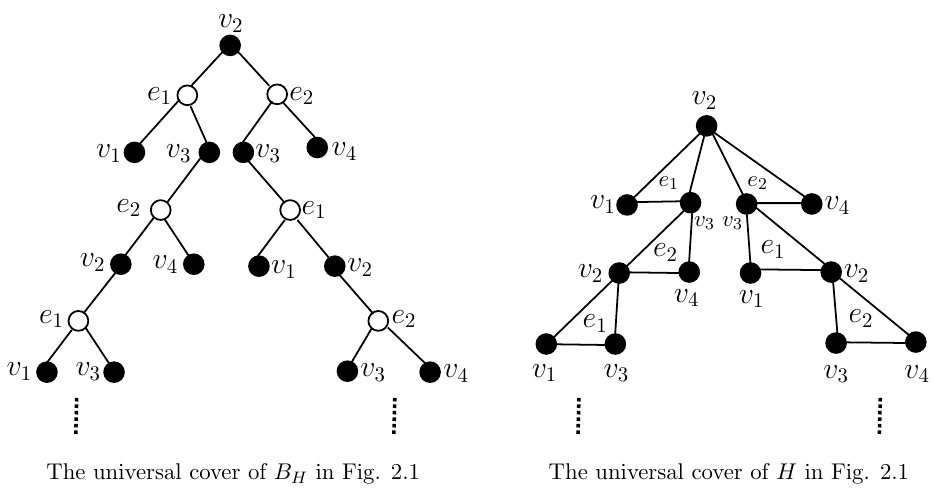}
\caption{\small Construction of the universal cover of the hypergraph $H$ in Fig. \ref{FC} via the universal cover of its incident graph $B_H$, where in the left graph (the infinite tree $\T_{B_H}$), a vertex represents a non-backtracking walk of $B_H$ from the root $v_2$ to the vertex which is the unique path of $\T_{B_H}$ from $v_2$ to the vertex, e.g. the vertex $v_1$ in the $7$th level represents the walk: $v_2e_1v_3e_2v_2e_1v_1$, a unique path of $\T_{B_H}$ from the root to $v_1$ in the $7$th level}\label{UC}
\end{figure}

\subsection{Matching polynomial} A \emph{matching} in a graph is a subset of its edges, no two of which share a common vertex. For a graph $G$, let $m_i$ denote the number of \emph{$i$-matchings} (i.e. a matching consisting of $i$ edges) of $G$ (with $m_0=1$). Heilmann and Lieb \cite{HL} defined the \emph{matching polynomial} of $G$ to be the polynomial
$$\mu_G(x)=\sum_{i\ge0}(-1)^im_i x^{n-2i},$$
where $n$ is the number of vertices of the graph $G$.
The matching polynomial of $G$ is closely related to the characteristic polynomial of the path tree $T_G(u)$ of $G$.
For a graph $G$ and a vertex $u$ of $G$, the \emph{path tree} $T_G(u)$ contains one vertex for each path beginning at $u$, and  two vertices (paths) are adjacent if their lengths differ by $1$ and one is a prefix of another.
Godsil \cite{God81} proved that the matching polynomial $\mu_G(x)$ of $G$ divides the characteristic polynomial of $A(T_G(u))$, which implies that all roots of $\mu_G(x)$ are real and have absolute value at most the spectral radius of $A(T_G(u))$.

Let $\mu(G)$ denote the largest root of $\mu_G(x)$.
By definition, the path tree $T_G(u)$ is a finite induced subgraph of the universal cover $\T_G$.
By Lemma 3.6 of \cite{MSS},
\begin{equation}\label{MatUni}\mu(G) \le \rho(T_G(u)) \le \rho(\T_G).\end{equation}
%Note that $\rho(\T_d)=2\sqrt{d-1}$ and $\rho(\T_{d,r})=\sqrt{d-1}+\sqrt{r-1}$.
%So, if $G$ is a $d$-regular graph, then
%$\mu(G) \le 2\sqrt{d-1}$, and if $G$ is a $(d,r)$-biregular graph, then
%$\mu(G) \le\sqrt{d-1}+\sqrt{r-1}$.
We now recall an identity of Godsil and Gutman \cite{GodGut} that relates the expected characteristic polynomial over uniformly random signings of the adjacency matrix of a graph to its matching polynomial.
Let $G$ be a graph with $m$ edges ordered as $e_1, \ldots, e_m$.
Associate a signing $s\in\{\pm1\}^m$ to the edges of $G$ such that the sign of $e_i$ is $s_i$ for each $i \in [m]$.
Let $A_s$ denote the signed adjacency matrix corresponding to $s$ and define
\begin{equation}\label{CharSgn}
\psi_s(x)=\mathrm{det}(xI-A_s)
\end{equation}
 to be the characteristic polynomial of $A_s$.
% Godsil and Gutman \cite{GodGut} established the relation between the expectation of $\psi_s(x)$ and the matching polynomial of $G$.

\begin{thm} \cite{GodGut} \label{Match1}Let $G$ be a simple graph and let $\psi_s(x)$ be defined as in (\ref{CharSgn}). Then
$$\E_{s\in\{\pm1\}^m}[\psi_s(x)]=\mu_G(x).$$
\end{thm}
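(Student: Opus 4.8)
The statement to prove is Theorem \ref{Match1} (Godsil--Gutman): for a simple graph $G$ with $m$ edges, $\E_{s\in\{\pm1\}^m}[\psi_s(x)]=\mu_G(x)$, where $\psi_s(x)=\det(xI-A_s)$ and $A_s$ is the adjacency matrix signed by $s$.

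My plan is to expand $\det(xI - A_s)$ using the permutation (Leibniz) expansion and then take the expectation term by term, exploiting independence of the signs.

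First I would write $\psi_s(x) = \det(xI - A_s) = \sum_{\pi \in \S_n} \sgn(\pi) \prod_{i=1}^n (xI-A_s)_{i,\pi(i)}$. Since the diagonal of $A_s$ is zero, the factor $(xI-A_s)_{i,i} = x$ when $\pi(i)=i$, and $(xI-A_s)_{i,\pi(i)} = -(A_s)_{i,\pi(i)}$ when $\pi(i)\neq i$. Decompose each permutation $\pi$ into its fixed points and its nontrivial cycles; the nontrivial cycles act on a set of, say, $2k$ or more moved points. Grouping by the support structure, one sees $\psi_s(x) = \sum_{\pi} x^{\#\mathrm{fix}(\pi)} \sgn(\pi)(-1)^{n - \#\mathrm{fix}(\pi)} \prod_{i:\pi(i)\neq i}(A_s)_{i,\pi(i)}$.

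Next I would take the expectation over the i.i.d.\ uniform signs $s \in \{\pm 1\}^m$. Each entry $(A_s)_{i,j}$ for an edge $\{i,j\}$ equals $s_{ij}\cdot (A(G))_{ij}$ (so it is $\pm 1$ if $\{i,j\}\in E(G)$ and $0$ otherwise), with the crucial symmetry $(A_s)_{i,j}=(A_s)_{j,i}$, i.e.\ both orientations of an edge carry the same sign $s_{ij}$. For a fixed permutation $\pi$ with nontrivial cycles, the product $\prod_{i:\pi(i)\neq i}(A_s)_{i,\pi(i)}$ is a product of the $s_{ij}$'s over the edges used by the cycles of $\pi$; a given edge $\{i,j\}$ appears with multiplicity equal to the number of times $\pi$ traverses $\{i,j\}$ (in either direction). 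Taking expectation: $\E[s_{ij}^t] = 1$ if $t$ is even and $0$ if $t$ is odd. Hence a permutation contributes $0$ to the expectation unless every edge in its cycle-support is used an even number of times. I would then argue (this is the key combinatorial step) that a permutation all of whose nontrivial cycles survive this parity test must have all of its nontrivial cycles of length exactly $2$: a cycle of length $\geq 3$ traverses each of its edges exactly once, so if the cycle uses an edge not shared with any other cycle it fails parity; and since distinct cycles have disjoint supports (as vertex sets), cycles cannot share edges, so a surviving $\pi$ is an involution, i.e.\ a product of disjoint transpositions $\{i_1,j_1\},\dots,\{i_k,j_k\}$, each of which must be an edge of $G$. (Here I would be careful: a transposition $(i\,j)$ traverses the edge $\{i,j\}$ twice — once as $(i,j)$, once as $(j,i)$ — so it passes the parity test automatically, and $\E$ of its contribution $s_{ij}^2$ is $1$.)

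Finally I would collect the surviving terms. A surviving $\pi$ is determined by a choice of a partial matching $M=\{\{i_1,j_1\},\dots,\{i_k,j_k\}\}$ in $G$; it has $n-2k$ fixed points contributing $x^{n-2k}$, sign $\sgn(\pi)=(-1)^k$ (each transposition is odd), and the extra factor $(-1)^{n-\#\mathrm{fix}(\pi)}=(-1)^{2k}=1$, and $\E$ of the $A_s$-product is $1$. So each $k$-matching contributes $(-1)^k x^{n-2k}$, and summing over all matchings of all sizes gives $\sum_{k\geq 0}(-1)^k m_k x^{n-2k} = \mu_G(x)$, as desired. The main obstacle is the clean combinatorial bookkeeping in the middle step — correctly arguing that only involutions supported on edge-sets survive the expectation, and correctly tracking the signs $\sgn(\pi)$ against the $(-1)$'s coming from the off-diagonal entries — but once the orientation-symmetry $(A_s)_{ij}=(A_s)_{ji}$ and the parity of $\E[s^t]$ are in place, everything falls out.
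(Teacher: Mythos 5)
Your proof is correct: the Leibniz expansion, the observation that $\E[s_{ij}^t]$ vanishes for odd $t$, the fact that disjoint cycles of a permutation cannot share an edge and a cycle of length at least $3$ uses each of its edges exactly once, and the final sign bookkeeping ($\sgn(\pi)=(-1)^k$ against $(-1)^{n-\#\mathrm{fix}(\pi)}=1$) are all handled properly, yielding $\sum_k(-1)^k m_k x^{n-2k}$. Note that the paper does not prove this statement at all --- it is quoted from Godsil--Gutman \cite{GodGut} as a known result --- so there is no internal proof to compare against; your argument is the standard one for this identity and would serve as a self-contained substitute for the citation.
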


\subsection{Interlacing families} Marcus, Spielman and Srivastava. \cite{MSS} defined interlacing families and examined their properties.

\begin{defi}\cite{MSS}
We say that a real-rooted polynomial $g(x)=\prod^{n-1}_{i=1}(x-\alpha_i)$ \emph{interlaces} a real-rooted polynomial $f(x)=\prod^{n}_{i=1}(x-\beta_i)$ if
$$\beta_1\le\alpha_1\le\beta_2\le\alpha_2\le\cdots\le\alpha_{n-1}\le\beta_n.$$
We say that polynomials $f_1,\ldots, f_k$ have a \emph{common interlacing} if there is a single polynomial $g$ that interlaces each $f_i$ for $i \in [k]$.
\end{defi}

By applying a similar discussion as in \cite[Lemma 4.2]{MSS}, we have the following result.

\begin{lem}\label{MSS4.2}
Let $f_1,\ldots, f_k$ be degree-$n$ real-rooted polynomials with positive leading coefficients, and define
$$f_\emptyset=\sum_{i=1}^kf_i.$$
If $f_1,\ldots, f_k$ have a common interlacing, then for each $\ell \in [n]$ there exists an $i$ for which the $\ell$-th largest root of $f_i$ is at most (at least)  the $\ell$-th largest root of $f_\emptyset$.
\end{lem}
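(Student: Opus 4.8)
The plan is to adapt, essentially verbatim, the argument of Marcus, Spielman and Srivastava for \cite[Lemma 4.2]{MSS}, while tracking signs carefully enough to extract both the ``at most'' and the ``at least'' conclusions from the single hypothesis. Let $g(x)=\prod_{i=1}^{n-1}(x-\alpha_i)$, with $\alpha_1\le\cdots\le\alpha_{n-1}$, be a common interlacing polynomial, and set $\alpha_0=-\infty$ and $\alpha_n=+\infty$; these break points partition the real line into the $n$ closed intervals $I_j=[\alpha_{j-1},\alpha_j]$ for $j\in[n]$. Writing $\beta^{(i)}_1\le\cdots\le\beta^{(i)}_n$ for the roots of $f_i$, the interlacing inequalities $\beta^{(i)}_1\le\alpha_1\le\beta^{(i)}_2\le\cdots\le\alpha_{n-1}\le\beta^{(i)}_n$ say precisely that $\beta^{(i)}_j\in I_j$ for every $i$ and $j$; equivalently, for a fixed $\ell$ the $\ell$-th largest root of each $f_i$ lies in the single interval $I_{n-\ell+1}$, independently of $i$.

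The key structural step is to pin down the sign pattern at the break points. Since $\alpha_{j-1}\le\beta^{(i)}_j\le\alpha_j$, in the generic situation $f_i$ has exactly one root in the interior of $I_j$ and $n-j$ roots strictly to the right of $\alpha_j$; as $f_i$ has positive leading coefficient, this forces $\sgn f_i(\alpha_j)=(-1)^{n-j}$ whenever $\alpha_j$ is not itself a root of $f_i$, a value independent of $i$. The same computation applies to $f_\emptyset=\sum_i f_i$: it has the common sign $(-1)^{n-j}$ at each $\alpha_j$, hence a root in every $I_j$ by the intermediate value theorem, hence exactly $n$ real roots, one per interval, with the same sign pattern. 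In particular, for the fixed $\ell$, the $\ell$-th largest root of $f_\emptyset$ and of every $f_i$ all lie in the common interval $I:=I_{n-\ell+1}$, on which each of $f_1,\dots,f_k,f_\emptyset$ has a unique root, and the sign to the left of that root is some $s$ common to all of them while the sign to the right is $-s$.

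To finish I would set $t$ equal to the $\ell$-th largest root of $f_\emptyset$, so $t\in I$ and $\sum_{i=1}^{k}f_i(t)=0$. If $f_i(t)=0$ for every $i$, then the $\ell$-th largest root of every $f_i$ equals $t$ and we are done. Otherwise the reals $f_1(t),\dots,f_k(t)$ sum to zero and are not all zero, so there is an index $i_+$ with $f_{i_+}(t)\ge 0$ and an index $i_-$ with $f_{i_-}(t)\le 0$. Assuming, without loss of generality, that on $I$ every polynomial is $\le 0$ to the left of its root and $\ge 0$ to the right (the opposite parity of $\ell$ being symmetric), the inequality $f_{i_+}(t)\ge 0$ forces the root of $f_{i_+}$ in $I$ — its $\ell$-th largest root — to be $\le t$, while $f_{i_-}(t)\le 0$ forces the $\ell$-th largest root of $f_{i_-}$ to be $\ge t$. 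This is exactly the conclusion of Lemma \ref{MSS4.2}.

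The only place where the argument is not a direct transcription of \cite{MSS}, and the main (minor) obstacle, is the degenerate case in which some $\beta^{(i)}_j$ coincides with an endpoint $\alpha_{j-1}$ or $\alpha_j$, or a root has higher multiplicity, so that ``one interior root with a genuine sign change'' can fail. I would handle this as in \cite{MSS}: either work throughout with the closed intervals and non-strict inequalities, observing that $\sgn f_i(\alpha_j)\in\{0,(-1)^{n-j}\}$ uniformly in $i$ still suffices for the intermediate-value and monotone-sign steps above; or perturb $g$ (and, if needed, the $f_i$) slightly to reduce to the generic case and then pass to the limit using continuity of roots. Everything else is routine.
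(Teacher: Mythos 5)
Your proposal is correct and is essentially the argument the paper relies on: the paper gives no proof of Lemma \ref{MSS4.2} itself but explicitly defers to the sign-pattern/intermediate-value argument of \cite[Lemma 4.2]{MSS}, which you reproduce and correctly extend to the $\ell$-th largest root and to both the ``at most'' and ``at least'' directions, including a sensible treatment of the degenerate endpoint/multiplicity cases.
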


\begin{defi}\cite{MSS}
Let $S_1,\ldots, S_m$ be finite sets, and for each assignment $(s_1,\ldots, s_m)\in S_1 \times \cdots \times S_m$, let $f_{s_1,\ldots,s_m}(x)$ be a real-rooted degree $n$ polynomial with positive leading coefficient.
For a partial assignment $(s_1,\ldots, s_k)\in S_1 \times \cdots \times S_k$ with $k < m$, define
$$ f_{s_1,\ldots,s_k}=\sum_{s_{k+1}\in S_{k+1}, \ldots, s_m \in S_m} f_{s_1,\ldots, s_k,s_{k+1},\ldots,s_m}$$
as well as
$$f_\emptyset=\sum_{s_1\in S_1, \ldots, s_m \in S_m} f_{s_1,\ldots, s_m}.$$
We say $\{f_{s_1,\ldots,s_m}\}_{S_1,\ldots,S_m}$ form an \emph{interlacing family} if for all $k=0,1,\ldots,m-1$ and all $(s_1,\ldots, s_k)\in S_1 \times \cdots \times S_k$, the polynomials $\{f_{s_1,\ldots,s_k,t}\}_{t \in S_{k+1}}$ have a common interlacing.
\end{defi}

By applying a similar discussion as \cite[Theorem 4.4]{MSS}, we have the following result.

\begin{lem}\label{Match2}
Let $S_1, \cdots, S_m$ be finite sets, and let $\{f_{s_1, \ldots, s_m}\}$ be an interlacing family of polynomials of degree $n$. Then for each $\ell \in [n]$ there exists some $s_1, \ldots, s_m\in S_1\times\cdots\times S_m$ so that the $\ell$-th largest root of $f_{s_1, \ldots, s_m}$ is at most (at least) the $\ell$-th largest root of $f_\emptyset$.
\end{lem}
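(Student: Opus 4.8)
The plan is to run a descent through the product tree $S_1\times\cdots\times S_m$, peeling off one coordinate at a time with Lemma \ref{MSS4.2}; this mirrors the proof of \cite[Theorem 4.4]{MSS} adapted to the present formulation. Fix the interpretation ``at most'' throughout (the ``at least'' case is verbatim with every inequality reversed), and fix $\ell\in[n]$.

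The first step I would carry out is to verify that \emph{every} partial-assignment polynomial $f_{s_1,\ldots,s_k}$, $0\le k\le m$, is real-rooted with positive leading coefficient, so that Lemma \ref{MSS4.2} may legitimately be invoked at each node. The leading coefficient is positive because $f_{s_1,\ldots,s_k}$ is a finite sum of the degree-$n$ polynomials $f_{s_1,\ldots,s_m}$, each of which has positive leading coefficient. Real-rootedness I would prove by downward induction on $k$, from $k=m$ (the leaves, real-rooted by hypothesis) to $k=0$: if all $f_{s_1,\ldots,s_k,t}$ with $t\in S_{k+1}$ are real-rooted with positive leading coefficient, then by the interlacing-family hypothesis they admit a common interlacer $g(x)=\prod_{i=1}^{n-1}(x-\alpha_i)$ with $\alpha_1\le\cdots\le\alpha_{n-1}$, and a standard sign-pattern argument shows $f_{s_1,\ldots,s_k}=\sum_{t}f_{s_1,\ldots,s_k,t}$ is real-rooted: comparing the roots of each summand with the $\alpha_i$, every summand has sign $(-1)^{n-i}$ at $x=\alpha_i$, hence so does the sum, which must therefore have a root in each interval $[\alpha_i,\alpha_{i+1}]$ ($i=1,\ldots,n-2$) together with one root in $(-\infty,\alpha_1]$ and one in $[\alpha_{n-1},\infty)$, accounting for all $n$ roots; the degenerate case where some $\alpha_i$ coincide is handled by a limiting argument over strictly interlacing $g$'s.

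With this in place the main loop is short. At the root, $f_\emptyset=\sum_{t\in S_1}f_t$ and, by the $k=0$ instance of the interlacing-family definition, $\{f_t\}_{t\in S_1}$ have a common interlacing; they are real-rooted with positive leading coefficient by the previous step, so Lemma \ref{MSS4.2} produces $s_1\in S_1$ whose $\ell$-th largest root of $f_{s_1}$ is no larger than that of $f_\emptyset$. Applying the same reasoning to $f_{s_1}=\sum_{t\in S_2}f_{s_1,t}$ (common interlacing from the $k=1$ instance, real-rootedness again from the first step) yields $s_2\in S_2$ with the $\ell$-th largest root of $f_{s_1,s_2}$ no larger than that of $f_{s_1}$; iterating $m$ times gives $(s_1,\ldots,s_m)\in S_1\times\cdots\times S_m$ together with a chain
\[
\lambda_\ell\!\left(f_{s_1,\ldots,s_m}\right)\le\lambda_\ell\!\left(f_{s_1,\ldots,s_{m-1}}\right)\le\cdots\le\lambda_\ell\!\left(f_{s_1}\right)\le\lambda_\ell\!\left(f_\emptyset\right),
\]
where $\lambda_\ell(\cdot)$ denotes the $\ell$-th largest root; transitivity completes the argument.

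I expect the only genuine content (hence the main obstacle) to be the real-rootedness bookkeeping in the first step: Lemma \ref{MSS4.2} presupposes real-rooted inputs, but the definition of an interlacing family only supplies the common-interlacing property, so one must separately deduce that the intermediate sums $f_{s_1,\ldots,s_k}$ are real-rooted — exactly the ``common interlacing $\Rightarrow$ real-rooted sum'' fact established via the sign-pattern argument above. Once that is granted, the remainder is a purely formal descent through the tree.
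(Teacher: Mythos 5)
Your proposal is correct and is essentially the argument the paper intends: the paper proves Lemma \ref{Match2} only by deferring to the proof of \cite[Theorem 4.4]{MSS}, which is exactly the inductive descent through the assignment tree via Lemma \ref{MSS4.2} that you carry out, including the standard ``common interlacing implies the sum is real-rooted'' sign-pattern step needed to keep Lemma \ref{MSS4.2} applicable at each level.
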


Marcus, Spielman and Srivastava \cite{MSS} proved that the characteristic polynomials $\{\psi_s\}_{s \in \{\pm 1\}^m}$ defined in (\ref{CharSgn}) form an interlacing family.
So, similar to Theorem 5.3 of \cite{MSS}, combining Theorem \ref{Match1} and Eq. (\ref{MatUni}), we have the following result.

\begin{cor}\label{Match3}
Let $G$ be a simple graph on $n$ vertices with adjacency matrix $A$.
Then for each $\ell \in [n]$ there exists a signing $s$ of $A$ so that the $\ell$-th largest eigenvalue of $A_s$ is at most (at least) the $\ell$-th largest root of the matching polynomial of $G$.
In particular, there exists a signing $s$ of $A$ so that the largest eigenvalue of $A_s$ is at most $\rho(\T_G)$.
\end{cor}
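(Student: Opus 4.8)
The plan is to assemble this corollary from three ingredients already laid out in the excerpt: (i) the Godsil--Gutman identity (Theorem~\ref{Match1}), which tells us that averaging the characteristic polynomials $\psi_s(x)$ over all $2^m$ signings $s\in\{\pm1\}^m$ yields exactly the matching polynomial $\mu_G(x)$, so that $f_\emptyset=\mu_G$ in the language of interlacing families; (ii) the fact, attributed to Marcus--Spielman--Srivastava, that the family $\{\psi_s\}_{s\in\{\pm1\}^m}$ is an interlacing family of degree-$n$ real-rooted polynomials with positive leading coefficient; and (iii) Lemma~\ref{Match2}, the ``existence of a good leaf'' lemma for interlacing families.

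First I would fix the edge ordering $e_1,\dots,e_m$ of $G$ and set $S_1=\cdots=S_m=\{\pm1\}$, so that each assignment $(s_1,\dots,s_m)$ corresponds to a signing $s$ and $f_{s_1,\dots,s_m}:=\psi_s(x)=\det(xI-A_s)$; each such polynomial has degree $n$, positive leading coefficient, and is real-rooted since $A_s$ is a real symmetric matrix. By the MSS interlacing-family result invoked just before the corollary, $\{\psi_s\}$ forms an interlacing family, and by Theorem~\ref{Match1} the root polynomial is $f_\emptyset=\E_{s}[\psi_s(x)]\cdot 2^m$ up to the harmless positive scalar $2^m$ (which does not change roots), i.e.\ $f_\emptyset$ has the same roots as $\mu_G(x)$. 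Then I would apply Lemma~\ref{Match2}: for each $\ell\in[n]$ there is a signing $s$ for which the $\ell$-th largest root of $\psi_s$ is at most (respectively at least) the $\ell$-th largest root of $f_\emptyset$, i.e.\ of $\mu_G(x)$. Since the roots of $\psi_s$ are exactly the eigenvalues of $A_s$, this is precisely the first assertion of the corollary.

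For the ``in particular'' clause I would specialize to $\ell=1$: there is a signing $s$ with $\lambda_1(A_s)\le \mu(G)$, the largest root of $\mu_G(x)$. Then I would quote Eq.~(\ref{MatUni}), namely $\mu(G)\le \rho(T_G(u))\le\rho(\T_G)$, to conclude $\lambda_1(A_s)\le\rho(\T_G)$. This chains the matching-polynomial bound to the universal-cover spectral radius with no further work.

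I do not expect any genuine obstacle here, since all the heavy lifting (the Godsil--Gutman identity, the interlacing-family structure of $\{\psi_s\}$, the root-bound Lemma~\ref{Match2}, and the inequality~(\ref{MatUni})) is imported as a black box. The only points requiring mild care are bookkeeping ones: confirming that the overall positive constant $2^m$ relating $f_\emptyset$ to $\mu_G$ is irrelevant for comparing roots; checking that $G$ having isolated vertices or no edges ($m=0$) is handled gracefully (in the $m=0$ case the family is the singleton $\{\mu_G\}=\{x^n\}$ and the statement is trivial); and matching the ``at most / at least'' dichotomy in Lemma~\ref{Match2} to the two parenthetical forms stated in the corollary, which simply amounts to choosing the appropriate sign convention (summing versus the min/max leaf) consistently. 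So the proof is essentially a three-line citation chain, and I would write it as such.
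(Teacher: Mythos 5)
Your proposal is correct and follows exactly the route the paper intends: it combines the Godsil--Gutman identity (Theorem~\ref{Match1}), the Marcus--Spielman--Srivastava fact that $\{\psi_s\}_{s\in\{\pm1\}^m}$ is an interlacing family, Lemma~\ref{Match2}, and Eq.~(\ref{MatUni}), which is precisely the citation chain the paper gives in lieu of a written-out proof. The bookkeeping points you flag (the harmless factor $2^m$ relating $f_\emptyset$ to $\mu_G$, and the degenerate case $m=0$) are handled correctly and do not change the argument.
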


\section{Spectra of hypergraph coverings}
In this section, we will investigate the spectral inclusion and union property of hypergraph coverings.
Let $H$ be a hypergraph and $B_H$ be its incidence graph.
%By Lemma \ref{hypcov}, it suffices to consider a $k$-cover $H_B^\phi$ of a hypergraph $H$, where $\phi: E(\overleftrightarrow{B_H}) \to \S_k$.
For the permutation voltage graph $(B_H, \phi)$, where $\phi: E(\overleftrightarrow{B_H}) \to \S_k$.
Noting that $\sgn \phi(u,e)=\sgn \phi(e,u)$ for each incidence $(u,e)$ with $u \in e$,
define a \emph{signed incident graph} also denoted by $(B_H, \phi)$  such that
each edge $\{u,e\}$ of $B_H$ is given a sign $\sgn \phi(u,e)$,
 %$(B_H, \phi)_{u,e}=(B_H, \phi)_{e,u}=\sgn \phi(u,e)$ if $u \in e$, and $0$ else;
 and a \emph{signed incident matrix} $Z(H,\phi)$ such that
$Z(H,\phi)_{ue}=\sgn (u,e)$ if $u \in e$, and $0$ else.

The \emph{incidence-signed hypergraph} $(H,\phi)$ is the hypergraph $H$ together with a signing of the incidences such that each incidence $(u,e)$ with $u \in e$ has a sign $\sgn \phi(u,e)$.
The adjacency matrix $ A(H,\phi)$ of $(H,\phi)$ is defined to be
\[
A(H,\phi)_{uv}=\sum_{e \in E(H): \{u,v\} \subseteq e} \sgn \phi(u,e) \sgn \phi(v,e),
\]
and the \emph{Laplacian matrix}  of $(H,\phi)$ is defined to be
$$Q(H,\phi):=Z(H,\phi) Z(H,\phi)^\top = D(H)+A(H,\phi).$$
If $H$ is a simple graph, then $A(H,\phi)$ is the adjacency matrix of a signed graph with $H$ as underlying graph such that each edge $e=\{u,v\}$ has a sign $\phi(u,e) \sgn \phi(v,e)$.
%
%
%linear (namely, any two edges intersect in at most one vertex),
%then $A(H,\phi)_{uv}=\sgn \phi(u,e)\phi(v,e) \in \{1,-1\}$ if $\{u,v\} \subseteq e$, and $0$ else, implying that

We get the following relationship between the spectrum of a covering hypergraph $H_B^\phi$ and that of its underlying hypergraph $H$,
where $\spec A$ denotes the spectrum of a square matrix $A$.

\begin{lem}\label{spec}
Let $H_B^\phi$ be a $k$-cover of a hypergraph $H$, where $\phi: E(\overleftrightarrow{B_H}) \to \S_k$.
Then, as multi-sets, the spectrum of $A(H_B^\phi)$ contains that of $A(H)$; in particular, if $k=2$, then, as multi-set union, $\spec A(H_B^\phi)=\spec A(H) \cup \spec A(H,\phi)$.
\end{lem}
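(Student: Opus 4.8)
The plan is to analyze $A(H_B^\phi)$ by passing through the incidence graph, exactly as the subsection on matrices sets up. First I would recall that the covering hypergraph $H_B^\phi$ has incidence graph $B_{H_B^\phi}\cong B_H^\phi$ (this is Lemma~\ref{hypcov}), and $B_H^\phi$ is a $k$-cover of the bipartite graph $B_H$ obtained from the permutation voltage assignment $\phi$. For \emph{graph} coverings the spectral decomposition is classical: the adjacency matrix $A(B_H^\phi)$ is, up to a permutation of rows and columns, a block matrix whose $(j,\ell)$ block records the arcs of $B_H$ whose voltage sends $\ell$ to $j$; decomposing $\mathbb{C}^k$ under the (left-regular) action this encodes, one gets that $\spec A(B_H^\phi)$ is the multiset union, over the irreducible characters, of the spectra of the corresponding ``twisted'' adjacency matrices of $B_H$. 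The trivial character always contributes a copy of $\spec A(B_H)$, which proves the inclusion part for the incidence graphs; for $k=2$ the only other character is the sign character, and it contributes precisely $\spec A(B_H,\phi)$ with $A(B_H,\phi)$ the signed adjacency matrix of $(B_H,\phi)$ (the signs being $\sgn\phi$, which is well defined on edges since $\sgn\phi(u,e)=\sgn\phi(e,u)$).

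Next I would transfer this from the incidence graphs back to the hypergraphs. The key point is the square identity from Section~2: $A(B_H)^2=\mathrm{diag}(Q(H),Q(H^*))$, so the nonzero part of $\spec A(B_H)$ determines $\spec Q(H)$ and hence, for the relevant regularity or via the general relation $Q(H)=D(H)+A(H)$, the spectrum of $A(H)$ up to the obvious eigenvalues. The same holds verbatim for the cover: $A(B_H^\phi)^2=\mathrm{diag}(Q(H_B^\phi),Q((H_B^\phi)^*))$ because $B_H^\phi=B_{H_B^\phi}$, and $Q(H_B^\phi)=Z(H_B^\phi)Z(H_B^\phi)^\top$; and likewise for the signed objects, $A(B_H,\phi)^2=\mathrm{diag}(Q(H,\phi),Q(H^*,\phi))$ by the definition $Q(H,\phi)=Z(H,\phi)Z(H,\phi)^\top$. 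So each of the three graph-level squaring identities restricts the graph spectral statement to a statement about the $Q$'s, i.e.\ about $D(H)+A(\cdot)$; since $D$ is the common diagonal degree matrix shared by $H$, $H_B^\phi$ (which is $d_v$-regular over each fiber, hence has the same degree sequence lifted) and $(H,\phi)$, subtracting $D$ turns ``$\spec Q(H_B^\phi)\supseteq\spec Q(H)$'' into ``$\spec A(H_B^\phi)\supseteq\spec A(H)$'', and the $k=2$ union statement follows identically. One has to be a little careful bookkeeping the multiplicities and the zero eigenvalues of $A(B_H^\phi)$ (coming from $\nu\ne e$), but these match on both sides because the covering multiplies $\nu-e$ by $k$.

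Alternatively — and this is cleaner and the route I would actually write — I would prove the hypergraph statement directly without the squaring trick. Identify $\mathbb{C}^{V(H)\times[k]}$ with $\mathbb{C}^{V(H)}\otimes\mathbb{C}^{[k]}$. The definition $(e,j)=\{(u,\phi(e,u)j):u\in e\}$ shows that $A(H_B^\phi)$, acting on a simple tensor $x\otimes \delta_j$, sends it to a sum over edges $e$ and pairs $u,v\in e$ of terms $\delta_v\otimes\phi(v,e)\phi(u,e)^{-1}\delta_j$ (matching how two lifted vertices $(u,\cdot),(v,\cdot)$ land in a common lifted edge). For $k=2$, $\mathbb{C}^{[2]}$ splits as the sum of the all-ones line and the $(1,-1)$ line, both $\phi(v,e)\phi(u,e)^{-1}$-invariant since each permutation in $\S_2$ fixes both lines acting by $\pm1$ according to its sign; on the all-ones line the operator restricts to $A(H)$, on the sign line it restricts to the operator with entry $\sum_{e\supseteq\{u,v\}}\sgn\phi(v,e)\sgn\phi(u,e)$, which is exactly $A(H,\phi)$. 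For general $k$ one only uses the trivial (all-ones) summand, which is $\phi$-invariant whatever $\phi$ is, giving the inclusion. The main obstacle is purely notational: getting the twisting convention straight so that the two lifted vertices over $u$ and $v$ that lie in a common lifted edge are correctly identified (the composition $\phi(v,e)\phi(u,e)^{-1}$ versus its inverse), and checking there is no double-counting from the two arcs $(u,e)$ and $(e,u)$ in $\overleftrightarrow{B_H}$. Once the convention is pinned down, the decomposition is immediate and the lemma follows.
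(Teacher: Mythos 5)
Your preferred (second) route is essentially the paper's own proof: the paper likewise works directly on $V(H)\times[k]$, checking that $\tilde{x}$ with $\tilde{x}|_{\{u\}\times[k]}=x_u$ satisfies the eigenvector equation for $A(H_B^\phi)$ (using $a_{(u,i)(v,j)}=|\{e:\{u,v\}\subseteq e,\ \phi(e,u)i=\phi(e,v)j\}|$), and for $k=2$ that $(y,-y)$ is an eigenvector for each eigenvector $y$ of $A(H,\phi)$, with orthogonality of the two families giving the multiset union. Be aware that your first sketch has a gap at the step ``subtracting $D$ turns $\spec Q(H_B^\phi)\supseteq\spec Q(H)$ into $\spec A(H_B^\phi)\supseteq\spec A(H)$'': since $D(H)$ need not be a scalar matrix, a containment of $Q$-spectra does not by itself yield a containment of $A$-spectra, so the direct eigenvector argument you chose is the right one.
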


\begin{proof}
Let $x=(x_u)$ be an eigenvector of $A(H)$ associated with an eigenvalue $\lambda$. Let $\tilde{x}=(x_{u,i})$ be defined on $V(H) \times [k]$ such that  $\tilde{x}|_{\{u\}\times [k]}:=x_u$ for all $u\in V(H)$.
We assert $\tilde{x}$ is an eigenvector of $A(H_B^\phi)$ associated with the same eigenvalue $\lambda$. By the eigenvector equation, for each $u\in V(H)$,
\begin{equation}\label{eig_H}
\lambda x_u=\sum_{e\in E_H(u):\{u,v\}\subseteq e}x_v.
\end{equation}
Let $A(H_B^\phi):=(a_{(u,i)(v,j)})$.
Observe that $$a_{(u,i)(v,j)}=|\{(e,l) \in E(H) \times [k]: \{(u,i),(v,j)\} \subseteq (e,l)\}|,$$
and by the definition of $H_B^\phi$, $\{u,v\}\subseteq e$ and $l=\phi(e,u)i=\phi(e,v)j$.
So $$a_{(u,i)(v,j)}=|\{e\in E(H): \{u,v\} \subseteq e, \phi(e,u)i=\phi(e,v)j\}|.$$
Now, for any $u\in V(H)$ and $i\in[k]$, by Eq. (\ref{eig_H}) and the definition of $\tilde{x}$, we have
\begin{align*} (A(H_B^\phi)\tilde{x})_{(u,i)}&=\sum_{(v,j) \in V(H) \times [k]} a_{(u,i)(v,j)}\tilde{x}_{v,j}\\
&=\sum_{e\in E_H(u): \{u,v\}\subseteq e,\phi(e,v)j=\phi(e,u)i}\tilde{x}_{v,j}\\
&=\sum_{e\in E_H(u):\{u,v\}\subseteq e}\tilde{x}_{v,\phi(e,v)^{-1}\phi(e,u)i}\\
&=\sum_{e\in E_H(u):\{u,v\}\subseteq e}x_v\\
&=\lambda x_u\\
&=\lambda \tilde{x}_{(u,i)},
\end{align*}
which implies that $\tilde{x}$ is an eigenvector of $A(H_B^\phi)$ associated with the eigenvalue $\lambda$.

If $k=2$, let $y$ be an eigenvector of $A(H,\phi)$ associated with an eigenvalue $\mu$.
Let $\tilde{y}$ be defined on $V(H) \times \{1,2\}$ such that  $\tilde{y}_{u,1}=y_u$ and $\tilde{y}_{u,2}=-y_u$ for all $u\in V(H)$.
We assert that $\tilde{y}$ is an eigenvector of $A(H^\phi)$ associated with the same eigenvalue $\mu$.
By the eigenvector equation, for each $u\in V(H)$,
\begin{equation}\label{eig_Hp} \mu y_u =\sum_{e\in E_H(u):\{u,v\}\subseteq e}\sgn \phi(u,e) \sgn \phi(v,e)y_v.
\end{equation}

For any $u \in V(H)$,
\begin{align*}
(A(H_B^\phi)\tilde{y})_{u,1}&=\sum_{(v,j) \in V(H) \times \{1,2\}} a_{(u,1)(v,j)}\tilde{y}_{v,j}\\
&=\sum_{e\in E_H(u): \{u,v\}\subseteq e,\phi(e,v)j=\phi(e,u)1}\tilde{y}_{v,j}\\
&=\sum_{e\in E_H(u):\{u,v\}\subseteq e}\tilde{y}_{v,\phi(e,v)^{-1}\phi(e,u)1}\\
&=\sum_{e\in E_H(u):\{u,v\}\subseteq e}\sgn \phi(u,e)\sgn \phi(v,e)y_v\\
&=\mu y_u\\
&=\mu\tilde{y}_{u,1};
\end{align*}
and
\begin{align*} (A(H_B^\phi)\tilde{y})_{(u,2)}&=\sum_{(v,j)\in V(H) \times \{1,2\}} a_{(u,2)(v,j)}\tilde{y}_{v,j}\\
&=\sum_{e\in E_H(u):\{u,v\}\subseteq e,\phi(e,v)j=\phi(e,u)2}\tilde{y}_{v,j}\\
&=\sum_{e\in E_H(u):\{u,v\}\subseteq e}\tilde{y}_{v,\phi^{-1}(e,v)\phi(e,u)2}\\
&=\sum_{e\in E_H(u),\{u,v\}\subseteq e}-\sgn \phi(u,e)\phi(v,e)y_v\\
&=-\mu y_u\\
&=\mu\tilde{y}_{u,2},
\end{align*}
which implies that $\tilde{y}$ is an eigenvector of $A(H_B^\phi)$ associated with the eigenvalue $\mu$.

By the above discussion, if $x$ is an eigenvector of $A(H)$, then by an arrangement of the vertices, $(x,\ldots,x)$ (repeating $k$ times) is an eigenvector of $A(H_B^\phi)$.
So, the spectrum of $A(H_B^\phi)$ contains that of $A(H)$, including multiplicity.
If $k=2$, if $y$ is an eigenvector of $A(H,\phi)$, then $(y,-y)$ is an eigenvector of the $2$-covering $H^\phi$.
Noting that $(x,x)$ is orthogonal to $(y,-y)$, so, as multi-set union,
$\spec A(H_B^\phi)=\spec A(H) \cup \spec A(H,\phi)$ in this case.
\end{proof}

\begin{rmk} Lemma \ref{spec} also holds for Laplacian matrix, namely,
under the condition of Theorem \ref{spec},
$\spec Q(H_B^\phi) \supseteq \spec Q(H)$, and if $k=2$, $\spec Q(H_B^\phi) = \spec Q(H) \cup \spec Q(H,\phi)$.
We omit the details here.
\end{rmk}

\begin{proof}[Proof of Theorem \ref{inclusion}]
By Lemma \ref{hypcov}, any $k$-cover $\bar{H}$ of a connected hypergraph $H$ is isomorphic to $H_B^\phi$ for some $\phi$, we immediately get the desired result by Lemma \ref{spec}.
\end{proof}

\section{The second largest eigenvalue}
Let $G$ be a graph or hypergraph (not necessarily finite but local finite with bounded degrees), and let $\mathbb{C}^{V(G)}=\{(x_v)_{v \in V(G)}: x_v \in \mathbb{C}, v \in V(G)\}$.
Define an \emph{inner product} $\langle x, y \rangle=\sum_{v \in V(G)} x_v \bar{y}_v$,
and the \emph{induced norm} $\|x\|_2=\langle x, x \rangle^{1/2}$, where $\bar{y}_v$ denotes the conjugate of $y_v$.
Let $$\ell^2(G)=\{x \in \mathbb{C}^{V(G)}: \|x\|_2 < +\infty\}.$$
Then $\ell^2(G)$ is a complex Hilbert space.
The \emph{adjacency operator} $A(G)$ of $G$ is defined to be such that for each $u \in V(G)$
\begin{equation}\label{Aop}(A(G) x)_u= \sum_{e \in E_G(u): \{u,v\} \in E(G)} x_v.\end{equation}
So $A(G)$ is a bounded linear operator on $\ell^2(G)$.
As $A(G)$ is self-adjoint, the spectral radius $\rho(G)$ of $A(G)$ can be defined as (see \cite{Mohar})
\begin{equation}\label{norm} \rho(G) =\|A(G)\|=\sup_{\|x\|_2=1} |\langle A(G)x, x \rangle|=\sup_{\|x\|_2=1} \langle A(G)x, x \rangle.\end{equation}

Let $H$ be a hypergraph, $B_H$ be its incident graph, and $\T_{B_H}$ the universal cover of $B_H$.
Then we will have a linear operator $A(\T_{B_H})$ on $\ell^2(\T_{B_H})$ defined as in (\ref{Aop}).
Recall that $\T_{B_H}$ has a root vertex say $v_0 \in V(H)$, and contains two kinds of vertex levels: the vertex levels and the edge levels.
Denote by $V_1,V_2$ the sets of vertices in the vertex levels and the edge levels, respectively.
With a little abusing of notations, for a vertex $w$ of $V_1$ (respectively, $V_2$) in $\T_{B_H}$, namely a non-backtracking walk $W$ of $B_H$ starting from $v_0$ and ending at a vertex $v$ of $H$ (respectively, an edge $e$ of $H$), we wills simply use the vertex $v$ (respectively, the edge $e$) to denote the walk $w$; see the labeling of the vertices in the left graph in Fig. \ref{UC}.

We have a decomposition:
$$ \ell^2(\T_{B_H})=\mathbb{C}^{V_1} \oplus \mathbb{C}^{V_2}.$$
%With a little abusing of notations, we use the vertices $v \in V(H)$ to denote the ending vertices of non-backtracking walks beginning at $v_0$ (namely the vertices in $V_1$), and use the edges $e \in E(H)$ to denote the ending vertices of non-backtracking walks beginning at $v_0$ (the vertices in $V_2$);
Define a linear operator
\begin{equation}\label{Z}Z: \mathbb{C}^{V_1} \to \mathbb{C}^{V_2}
\end{equation}
such that
$$ (Zx)_e=\sum_{v: v \in e} x_v;$$
and
\begin{equation}\label{Z*} Z^*: \mathbb{C}^{V_2} \to \mathbb{C}^{V_1}
\end{equation}
such that
$$ (Z^*x)_v=\sum_{e: v \in e} x_e.$$
It is easily seen that $Z^*$ is the adjoint operator of $Z$, and
$$ A(\T_{B_H})^2=Z^*Z \oplus ZZ^*.$$
As $A(\T_{B_H})$ is self-adjoint and $\rho(Z^*Z)=\rho(ZZ^*)$, we have
\begin{equation}\label{Rerad} \rho(A(\T_{B_H}))^2=\rho(A(\T_{B_H})^2)=\rho(Z^*Z)=\rho(ZZ^*).
\end{equation}

\begin{lem}\label{OpR}
Let $H$ be a hypergraph, let $Z,Z^*$ be defined in (\ref{Z}) and (\ref{Z*}) respectively.
Then
$$ Z^*Z=D(\T_H)+A(\T_H),$$
where $D(\T_H)$ is the degree diagonal operator such that $(D(\T_H)x)_v=d_v x_v$ for each $x \in \ell^2(\T_H)$ and each vertex $v \in V(\T_H)$.
\end{lem}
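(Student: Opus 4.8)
The plan is to compute $Z^*Z$ directly on basis vectors and read off the two terms. Fix a vertex $v \in V(\T_H)$ (equivalently, a vertex in the vertex levels $V_1$ of $\T_{B_H}$) and unwind the definitions: $(Zx)_e = \sum_{w: w \in e} x_w$ for each edge $e$ (a vertex in the edge levels $V_2$), and then $(Z^*Zx)_v = \sum_{e: v \in e} (Zx)_e = \sum_{e: v \in e} \sum_{w: w \in e} x_w$. First I would split off the $w = v$ contribution: for each edge $e$ containing $v$ the inner sum contributes $x_v$ once, and there are $d_v$ such edges in $\T_{B_H}$ (the degree of $v$ in $\T_{B_H}$ equals its degree $d_v$ in $\T_H$, since the universal cover is built so that local degrees are preserved). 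This yields the diagonal term $d_v x_v = (D(\T_H)x)_v$. The remaining contribution is $\sum_{e: v \in e} \sum_{w \in e,\, w \ne v} x_w$, which must be identified with $(A(\T_H)x)_v$.

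For the off-diagonal part, I would appeal to the structure of the universal cover of a hypertree. In $\T_H$, which is itself a hypertree, any two distinct vertices $v, w$ lie in \emph{at most one} common edge — indeed if they lay in two edges we would obtain a cycle in $\T_H$. Hence in the double sum $\sum_{e: v \in e}\sum_{w \in e, w \ne v} x_w$ each neighbor $w$ of $v$ is counted exactly once (through the unique edge of $\T_H$ containing both $v$ and $w$), so the sum equals $\sum_{e \in E_{\T_H}(v): \{v,w\} \in E(\T_H)} x_w$, which is exactly $(A(\T_H)x)_v$ by the definition \eqref{Aop} of the adjacency operator. Combining, $(Z^*Zx)_v = (D(\T_H)x)_v + (A(\T_H)x)_v$ for every $v$, which is the claimed operator identity. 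To be careful I would also note that all these sums are finite (each $v$ has bounded degree $d_v$ and each edge has bounded size $r$, since $H$ has bounded degrees and is uniform after our standing assumption), so the manipulations are legitimate on $\ell^2(\T_H)$ and $Z, Z^*$ genuinely are bounded operators.

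The main subtlety — and the step I would be most careful about — is matching the combinatorics of $\T_{B_H}$ with that of $\T_H$: one must verify that the edges of $\T_H$ incident to $v$ correspond bijectively to the edge-level neighbors of $v$ in $\T_{B_H}$, and that for each such edge $e$ the vertex-level neighbors of $e$ in $\T_{B_H}$ are precisely the vertices of $\T_H$ lying in $e$. This is essentially the definition of $\T_H$ as the bipartite half of $\T_{B_H}$ on the vertex levels (recalled just before Fig. \ref{UC}), but it is worth spelling out because it is the only place where the tree/hypertree structure is actually used, and it is what rules out a neighbor $w$ being counted with multiplicity greater than one. Once this correspondence is in hand, the identity $Z^*Z = D(\T_H) + A(\T_H)$ is immediate, and (although not asked) the analogous computation gives $ZZ^* = D(\T_{H^*}) + A(\T_{H^*})$ on the edge levels.
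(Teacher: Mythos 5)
Your proposal is correct and follows essentially the same route as the paper: expand $(Z^*Zx)_v = \sum_{e: v\in e}\sum_{u: u\in e} x_u$, split off the $u=v$ contribution to get $d_v x_v$, and identify the remainder with $(A(\T_H)x)_v$. The extra discussion of the hypertree structure (each pair of vertices lying in at most one common edge) is harmless but not needed, since the adjacency operator in \eqref{Aop} is already defined as a sum over incident edges and so matches the double sum with multiplicity automatically.
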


\begin{proof}
Let $V_1,V_2$ be the sets of vertices in the vertex levels and the edge levels of $\T_{B_H}$, respectively.
Then, for any $x \in \mathbb{C}^{V_1}$ and $v \in V_1$,
\begin{align*} (Z^*Zx)_v & =\sum_{e: v \in e} (Zx)_e\\
& =\sum_{e: v \in e} \sum_{u: u \in e} x_u\\
&=\sum_{e: v \in e} \left(x_v+ \sum_{u: u \in e, u \ne v} x_u\right)\\
&=d_v x_v+ \sum_{e\in E_G(v): \{v,u\} \subseteq e} x_u\\
& =(D(\T_H)x)_v + (A(\T_H)x)_v.
\end{align*}
The result follows.
\end{proof}

Note that $Z^*Z$ is a self-adjoint and positive operator on $\ell^2(\mathbb{C}^{V_1})$.
So by Lemma \ref{OpR},
\begin{equation}\label{Radius} \rho(Z^*Z)=\sup_{\|x\|_2=1} \langle (Z^*Z)x, x \rangle
=\sup_{\|x\|_2=1} \left(\langle D(\T_H) x, x \rangle + \langle A(\T_H) x, x \rangle\right).
\end{equation}
%Let $G$ be a graph on $n$ vertices.
Greenberg \cite{Green} (see also \cite{Cio}) proved that for a graph $G$ on $n$ vertices,
\begin{equation}\label{Green}\la(G) \ge \rho(\T_G)-o_n(1).\end{equation}
%We will give a corresponding result on the second largest eigenvalue of a $d$-regular hypergraph.

\begin{proof}[Proof of Theorem \ref{lowB}]
As $H$ is $d$-regular, we have
$A(B_H)^2=dI + A(H)$ and $D(\T_H)=dI$, where $I$ is an identity matrix or identity operator.
By (\ref{Green}), (\ref{Rerad}) and Lemma \ref{OpR},
\begin{align*}
 \la_2(H)& =\la_2(B_H)^2 -d \\
 &\ge (\rho(\T_{B_H})-o_n(1))^2-d\\
& =\rho(\T_{B_H})^2-d-o_n(1)\\
&=\rho(\T_{B_H}^2)-d-o_n(1)\\
& =\rho(Z^*Z)-d-o_n(1)\\
&=d+\rho(\T_H)-d-o_n(1)\\
&=\rho(\T_H)-o_n(1).
\end{align*}
\end{proof}

We immediately arrive at Feng-Li bound \ref{FengLiB2}.
\begin{cor}\cite{LS}
Let $H$ be a $(d,r)$-regular hypergraph.
Then
$$\la_2(H) \ge  r-2+ 2\sqrt{(d-1)(r-1)}-o_n(1),$$
\end{cor}

\begin{proof}
If $H$ is $(d,r)$-regular, then
$\T_{B_H}$ is a $(d,r)$-biregular infinite tree with spectral radius
$\rho(\T_{B_H})=\sqrt{d-1}+\sqrt{r-1}$.
By (\ref{Rerad}) and (\ref{Radius}), we have
$$\rho(\T_H)=\rho(\T_{B_H})^2-d=r-2+ 2\sqrt{(d-1)(r-1)}.$$
The result follows by Theorem \ref{lowB}.
\end{proof}

\section{Ramanujan hypergraphs}
In this section, we will investigate the existence or the construction of Ramanujan hypergraphs by coverings.
Let $H_B^\phi$ be a $k$-fold covering of a hypergraph $H$, where $\phi: E(\overleftrightarrow{B_H}) \to \S_k$.
By definition,
$$A(B_H,\phi)=\left[ \begin{array}{cc}
O & Z(H,\phi) \\
Z(H,\phi)^\top & O
\end{array} \right],$$
and
$$Q(H,\phi)=Z(H,\phi) Z(H,\phi)^\top = D(H)+A(H,\phi).$$

Similar to Eq. (\ref{polyR}), we have
$$\la^{\nu(H)}\psi_{A(B_H,\phi)}(\la) = \la^{e(H)}\psi_{Q(H,\phi)}(\la^2).$$
%where $\psi_A(\la)$ denotes the characteristic polynomial of a square matrix $A$.
Now taking $k=2$ and considering the expectation on both sides over all choices of $\phi$ in $\S_2$ independently and uniformly, by Theorem \ref{Match1}, we have
\begin{equation}\label{exprel} \la^{\nu(H)}\mu_{B_H}(\la)=\la^{\nu(H)} \E (\psi_{A(B_H,\phi)}(\la)) = \la^{e(H)}\E(\psi_{Q(H,\phi)}(\la^2)),
\end{equation}
as each $\phi: E(\overleftrightarrow{B_H}) \to \S_2$ is one to one corresponding to a signing of $B_H$ which yields a signed adjacency matrix $A(B_H,\phi)$.

\subsection{Right-sided Ramanujan hypergraphs}

\begin{lem}\label{Extsgn}
There exits an assignment $\phi: E(\overleftrightarrow{B_H}) \to \S_2$ such that
%the largest eigenvalue of $Q(H,\phi)$ holds that
$$ \la_1(Q(H,\phi)) \le \rho(\T_{B_H})^2.$$
In particular, if $H$ is $(d,r)$-regular, then there exits a $\phi: E(\overleftrightarrow{B_H}) \to \S_2$ such that
$$ \la_1(A(H,\phi)) \le r-2+ 2\sqrt{(d-1)(r-1)}.$$
\end{lem}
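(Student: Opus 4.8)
The plan is to reduce the statement to Corollary \ref{Match3} applied to the incidence graph $B_H$, using the bijection between permutation voltage assignments $\phi\colon E(\overleftrightarrow{B_H})\to\S_2$ and $\pm1$-signings of the (simple) bipartite graph $B_H$ that was noted just after Eq.~(\ref{exprel}). Since every element of $\S_2$ is its own inverse, the requirement $\phi(v,e)=\phi^{-1}(e,v)$ is automatic, and under the identification $s_{\{v,e\}}=\sgn\phi(v,e)$ the signed adjacency matrix $A_s$ of $B_H$ coincides with $A(B_H,\phi)=\left[\begin{smallmatrix} O & Z(H,\phi)\\ Z(H,\phi)^\top & O\end{smallmatrix}\right]$.

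First I would invoke Corollary \ref{Match3} with $G=B_H$ and $\ell=1$: there is a signing $s$, equivalently an assignment $\phi\colon E(\overleftrightarrow{B_H})\to\S_2$, for which $\la_1(A(B_H,\phi))=\la_1(A_s)\le\rho(\T_{B_H})$. Next I would translate this bound on $A(B_H,\phi)$ into the desired bound on $Q(H,\phi)=Z(H,\phi)Z(H,\phi)^\top=D(H)+A(H,\phi)$. Because of the bipartite block form of $A(B_H,\phi)$, its spectrum is symmetric about $0$ (if $(u,w)$ is an eigenvector for $\la$, then $(u,-w)$ is one for $-\la$), so $\la_1(A(B_H,\phi))=\rho(A(B_H,\phi))\ge 0$; and from the identity $\la^{\nu(H)}\psi_{A(B_H,\phi)}(\la)=\la^{e(H)}\psi_{Q(H,\phi)}(\la^2)$ — the signed analogue of (\ref{polyR}) recorded before Eq.~(\ref{exprel}) — the nonzero eigenvalues of $Q(H,\phi)$ are exactly the squares of the nonzero eigenvalues of $A(B_H,\phi)$. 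Hence $\la_1(Q(H,\phi))=\la_1(A(B_H,\phi))^2\le\rho(\T_{B_H})^2$, which is the first assertion.

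Finally, for the $(d,r)$-regular case I would use $Q(H,\phi)=D(H)+A(H,\phi)=dI+A(H,\phi)$, so that $\la_1(A(H,\phi))=\la_1(Q(H,\phi))-d\le\rho(\T_{B_H})^2-d$. Since $B_H$ is then $(d,r)$-biregular with universal cover $\T_{d,r}$ and $\rho(\T_{B_H})=\sqrt{d-1}+\sqrt{r-1}$, this gives $\rho(\T_{B_H})^2-d=(\sqrt{d-1}+\sqrt{r-1})^2-d=r-2+2\sqrt{(d-1)(r-1)}$, as claimed. The argument is largely bookkeeping; the one point requiring care is the passage from $A(B_H,\phi)$ to $Q(H,\phi)$ — namely checking that the top eigenvalue of the signed incidence-type matrix $A(B_H,\phi)$ is nonnegative (spectral symmetry) so that squaring the bound is legitimate, and verifying that the dictionary between $\phi$ and $\pm1$-signings is exactly the one under which Corollary \ref{Match3}, stated for simple graphs, applies to $B_H$ (which is indeed simple).
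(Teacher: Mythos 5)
Your proposal is correct and follows essentially the same route as the paper: apply Corollary \ref{Match3} to the incidence graph $B_H$ to obtain a signing (equivalently an assignment $\phi$) with $\la_1(A(B_H,\phi))\le\rho(\T_{B_H})$, pass to $Q(H,\phi)$ by squaring, and then use $Q(H,\phi)=dI+A(H,\phi)$ together with $\rho(\T_{B_H})=\sqrt{d-1}+\sqrt{r-1}$ in the $(d,r)$-regular case. Your added justifications (the spectral symmetry of the bipartite block matrix ensuring $\la_1(A(B_H,\phi))\ge0$, and the signed analogue of (\ref{polyR})) are correct details that the paper leaves implicit.
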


\begin{proof}
%Observe that each assignment $\phi: E(\overleftrightarrow{B_H}) \to \S_2$ is one to one corresponding to a signing of $B_H$ which yields a signed matrix $A(B_H,\phi)$.
By Eq. (\ref{exprel}) and Corollary \ref{Match3}, there exits an assignment $\phi$ such that
the largest eigenvalue of $A(B_H,\phi)$ is at most $\rho(\T_{B_H})$.
So,
 $$\la_1(Q(H,\phi))=\la_1(A(B_H,\phi))^2 \le \rho(\T_{B_H})^2.$$

If  $H$ is $(d,r)$-regular, then
$\la_1(Q(H,\phi))=d+\la_1(A(H,\phi)$, and $\rho(\T_{B_H})=\sqrt{d-1}+\sqrt{r-1}$.
Hence,
\begin{align*}
\la_1(A(H,\phi)& =\la_1(Q(H,\phi))-d \\
&  \le \left(\sqrt{d-1}+\sqrt{r-1}\right)^2-d\\
&=r-2+ 2\sqrt{(d-1)(r-1)}.
\end{align*}
The result follows.
\end{proof}

\begin{proof}[Proof of Theorem \ref{onesR}]
Let $H$ be a $(d,r)$-regular hypergraph.
By Lemma \ref{Extsgn}, there is a $\phi: E(\overleftrightarrow{B_H}) \to \S_2$ such that
$ \la_1(A(H,\phi)) \le r-2+ 2\sqrt{(d-1)(r-1)}.$
By definition, $H_B^\phi$ is a $2$-fold covering of $H$.
By Theorem \ref{spec}, $\spec A(H_B^\phi)=\spec A(H) \cup \spec A(H,\phi)$ as multi-set union.
So, $H^\phi_B$ is a right-sided Ramanujan covering of $H$ by definition.
\end{proof}

\begin{exm}\label{exm}(Examples of  Ramanujan hypergraphs)
Let $K^d_{d+1}$ be a complete $d$-uniform hypergraph on $d+1$ vertices, which is obviously $(d,d)$-regular.
The eigenvalues of $A(K^d_{d+1})$ are
$d(d-1)$, and $-(d-1)$ with multiplicity $d$.
It is easily verified that $K^d_{d+1}$ is a  Ramanujan hypergraph.

A \emph{projective plane} $PG(2,q)$ of order $q$ consists of a set $X$ of
$q^2 + q + 1$ elements called points, and a set $B$ of $(q + 1)$-subsets of $X$ called lines, such that any two points lie on a unique line. It can be derived from the definition that any
point lies on $q + 1$ lines, and two lines meet in a unique point.
Associated with the projective plane we have a hypergraph also denoted by $PG(2,q)$ whose vertices are the points of $X$ and edges are the lines of $B$.
Then $PG(2,q)$ is a $(q + 1,q+1)$-regular
 hypergraph with $q^2 + q + 1$ vertices.
It is easily to verify that $PG(2,q)$ has the trivial eigenvalue $q(q+1)$, and the eigenvalues $-1$ with multiplicity $q(q+1)$.
So $PG(2,q)$ is a Ramanujan hypergraph.

An \emph{affine plane} $AG(2, q)$ of order $q$ can be obtained from the projective plane $PG(2,q)$ by  removing a line and all the points on it.
It is easily seen that the corresponding hypergraph $AG(2, q)$ is a $(q+1,q)$-regular, which has $q^2$ vertices and $q^2+q$ edges such that any
two point lie in a unique line.
The eigenvalues of $AG(2, q)$ are $q^2-1$, and $-1$ with multiplicity $q^2-1$,
which implying that $AG(2, q)$ is a Ramanujan hypergraph.
\end{exm}

\begin{proof}[Proof of Theorem \ref{2RR}]
By Theorem \ref{spec} and Theorem \ref{onesR}, if we have a right-sided Ramanujan hypergraph, we can construct an infinite tower of right-sided Ramanujan $2$-coverings.
Note that a projective plane or affine plane of order $q$ always exists if $q$ is a prime power.
So, from the Ramanujan hypergraphs in Example \ref{exm}, we get the desired result immediately.
\end{proof}

In the Definition \ref{RamaGraph} of Ramanujan hypergraphs,
if $d=r$, then the lower bound in (\ref{RamaHR}) is reduced to
$\la \ge -d$, which is a trivial condition.
So, a $(d,d)$-regular hypergraph is Ramanujan if and only if it is right-sided Ramanujan.

\begin{cor}\label{existRdd}
There exists an infinite family of $(d,d)$-regular Ramanujan hypergraphs for every $d\ge3$.
\end{cor}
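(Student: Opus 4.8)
The plan is to obtain Corollary \ref{existRdd} as an immediate consequence of Theorem \ref{2RR}, once we observe that in the diagonal case $d=r$ the notion of a Ramanujan hypergraph collapses onto that of a right-sided Ramanujan hypergraph.

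First I would recall the spectral constraint from Section \ref{Sec-HM}: for any $(d,d)$-regular hypergraph $H$ the Laplacian $Q(H)=dI+A(H)$ is positive semidefinite, so every eigenvalue $\la$ of $A(H)$ satisfies $\la\ge -d$. When $d=r$ the lower Ramanujan bound (\ref{RamaHR}) reads $\la\ge r-2-2\sqrt{(d-1)(r-1)}=d-2-2(d-1)=-d$, which therefore holds for every eigenvalue automatically (in particular for the obvious eigenvalue $-d$, should it occur). Consequently the two-sided condition (\ref{RamaH}) is equivalent to its upper half (\ref{RamaHL}); that is, a $(d,d)$-regular hypergraph is Ramanujan if and only if it is right-sided Ramanujan, exactly as noted in the remark preceding the corollary.

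Next I would invoke Theorem \ref{2RR}, whose first assertion already produces, for every $d\ge 3$, an infinite family of $(d,d)$-regular right-sided Ramanujan hypergraphs: concretely, start from the complete $d$-uniform hypergraph $K^d_{d+1}$, which is Ramanujan, hence right-sided Ramanujan, by Example \ref{exm}, and build an infinite tower of right-sided Ramanujan $2$-coverings using Theorem \ref{onesR}, the spectrum of each cover being the multi-set union of the spectrum of its base with a new incidence-signed spectrum by Theorem \ref{inclusion}. By the equivalence established in the previous step, every member of this family is in fact a $(d,d)$-regular Ramanujan hypergraph, which is the claim.

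There is essentially no obstacle here; the only point requiring a moment's care is the role of the ``obvious'' eigenvalue, but on one hand $\nu(K^d_{d+1})=e(K^d_{d+1})=d+1$ and both $\nu(\cdot)$ and $e(\cdot)$ are multiplied by $2$ under a $2$-covering, so no obvious eigenvalue ever appears along the tower, and on the other hand, even if it did, the value $-d$ satisfies the trivial lower bound above. Hence the reduction to Theorem \ref{2RR} is complete.
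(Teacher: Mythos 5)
Your proposal matches the paper's argument exactly: the text preceding the corollary notes that for $d=r$ the lower bound (\ref{RamaHR}) reduces to the trivial condition $\la\ge -d$, so Ramanujan and right-sided Ramanujan coincide, and the corollary then follows from Theorem \ref{2RR}. Your additional remarks on positive semidefiniteness of $Q(H)$ and on the obvious eigenvalue are correct but not needed beyond what the paper already states.
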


\begin{rmk}\label{Conc}
Marcus, Spielman and Srivastava have proved every $(d,r)$-biregular graph $G$ has a Ramanujan $2$-covering, namely, there exists a $2$-cover $\bar{G}$ of $G$ such that its new eigenvalues are at most $\sqrt{d-1}+\sqrt{r-1}$ (\cite[proof of Theorem 5.6]{MSS}).
Frome the graphs $G$ and $\bar{G}$, we get a $(d,r)$-regular hypergraph $H$ and its $2$-cover $\bar{H}$ such that their incidence graphs are respectively $G$ and $\bar{G}$.
By hypergraph covering lemma (Lemma \ref{hypcov}) and spectral union property (Theorem \ref{inclusion} and Bilu-Linial \cite{BL}),
the new eigenvalues of $\bar{H}$ are at most $$\left(\sqrt{d-1}+\sqrt{r-1}\right)^2-d=r-2+2\sqrt{(d-1)(r-1)}.$$
So, every $(d,r)$-regular hypergraph has a right-sided Ramanujan $2$-covering, implying Theorem \ref{onesR}.

%As noted above, a $(d,d)$-regular $H$ is Ramanujan if and only if it is right-sided Ramanujan.
Marcus, Spielman and Srivastava proved there exists an infinite sequence of $d$-regular bipartite Ramanujan graphs (\cite[Theorem 5.5]{MSS}), which implies Corollary \ref{existRdd} by a similar discussion as above.
The starting $d$-regular bipartite graph for construction in \cite[Theorem 5.5]{MSS} is a complete bipartite graph $K_{d,d}$ of degree $d$.
Note that $K_{d,d}$ is corresponding to a $d$-uniform hypergraph with only one edge of multiplicity $d$.
Here we use complete $d$-uniform hypergraph $K_{d+1}^d$ for the starting hypergraph in the construction.
\end{rmk}

\subsection{Left-sided Ramanujan hypergraphs}

We now discuss the $(d,r)$-regular left-sided Ramanujan hypergraphs $H$ for $d \ne r$.
Note that  $d<r$ if and only if $\nu(H) > e(H)$.
Define $\tau_H=\min\{\nu(H),e(H)\}$.
Then the lower bound in (\ref{RamaHR}) is exactly
\begin{equation}\label{lowLR}\la_{\tau_H}(H) \ge r-2-2\sqrt{(d-1)(r-1)}.
\end{equation}

%\begin{thm}\label{onesL}
%Let $H$ be a $(d,r)$-regular hypergraph, and $\mu_{\tau}(B_H)$ be the $\tau$-th largest root of the matching polynomial $\mu_{B_H}(x)$ of $B_H$, where $\tau=\tau_H=\min\{\nu(H),e(H)\}$.
%If \begin{equation}\label{conj}
%\mu_{\tau}(B_H)\ge \left|\sqrt{d-1}-\sqrt{r-1}\right|,
%\end{equation}
%then  $H$ has a left-sided Ramanujan $2$-covering.
%\end{thm}

\begin{proof}[Proof of Theorem \ref{onesL}]
By Corollary \ref{Match3},
there exists an assignment $\phi: E(\overleftrightarrow{B_H}) \to \S_2$ such that the $\tau_H$-th largest eigenvalue $\la_{\tau_H}(A(B_H,\phi))$ is at least $\mu_{\tau_H}(B_H)$.
So, if $\mu_{\tau_H}(B_H)\ge \left|\sqrt{d-1}-\sqrt{r-1}\right|$, we have
\begin{align*}
\la_{\tau_H}(A(H,\phi)) & = \la_{\tau_H}(B_H,\phi)^2 -d \ge \mu_{\tau_H}(B_H)^2-d \\
& \ge \left(\sqrt{d-1}-\sqrt{r-1}\right)^2-d=r-2-2\sqrt{(d-1)(r-1)},
\end{align*}
which implies that $H$ has a left-sided Ramanujan $2$-covering $H^\phi_B$ by Eq. (\ref{lowLR}).
\end{proof}

If $H$ is a left-sided Ramanujan hypergraph satisfying Eq. (\ref{conj}), then
$H_B^\phi$ is a left-sided Ramanujan hypergraph for some $\phi: E(\overleftrightarrow{B_H}) \to \S_2$, as $\tau_{H_B^\phi}=2\tau_H$ and
$$\la_{\tau_{H^\phi}} \ge \min\{\la_{\tau_H}(H),\la_{\tau_H}(A(H,\phi))\}.$$
We will further investigate the ``middle root'' of the matching polynomial of biregular graphs.
Let $G$ be a  $(d,r)$-biregular graph, and let $V_1,V_2$ be the two parts of $G$ consisting of vertices of degree $d$ and $r$ respectively.
%Then $d |V_1|=r |V_2|$.
Denote $\tau_G:= \min \{|V_1|,|V_2|\}$.
%We will have the following result on the $\tau_G$-th largest root of $\mu_G(x)$.

\begin{lem}\label{mutau}
Let $G$ be a  $(d,r)$-biregular graph, and let $\mu_{\tau_G}(G)$ be the $\tau_G$-th largest root of $\mu_G(x)$.
Then $G$ has a $\tau_G$-matching, and
$$0< \mu_{\tau_G}(G) \le \max\{\sqrt{d}, \sqrt{r}\}.$$
\end{lem}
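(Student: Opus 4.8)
The plan is to handle the two assertions separately: first that $G$ has a $\tau_G$-matching, and then the upper bound on $\mu_{\tau_G}(G)$. For the existence of a $\tau_G$-matching, assume without loss of generality that $\tau_G = |V_1| \le |V_2|$, so the vertices of $V_1$ have degree $d$. I would invoke the defect form of König's theorem (or equivalently Hall's theorem) for bipartite graphs: since $G$ is $(d,r)$-biregular, for any subset $S \subseteq V_1$ the number of edges leaving $S$ is $d|S|$, and each vertex of $V_2$ absorbs at most $r$ of these, so $|N(S)| \ge \frac{d}{r}|S|$. When $d \ge r$ this immediately gives Hall's condition $|N(S)| \ge |S|$ and hence a perfect matching of $V_1$ into $V_2$. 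When $d < r$ one needs $d = r$ forced? No — actually one should be more careful; but note the matching polynomial's degree-parity structure gives another route: the number of $i$-matchings $m_i$ is positive for all $i \le \tau_G$ precisely when $G$ has a matching of size $\tau_G$, and for a connected (or even just edge-covering) biregular bipartite graph this is classical. So the cleanest line is: reduce to Hall's condition on the smaller side, using the edge-count inequality $d|S| \le r|N(S)|$ together with $d \ge r$ when $|V_1| \le |V_2|$ (since $d|V_1| = r|V_2| \ge r|V_1|$ forces $d \ge r$).

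Once $G$ has a $\tau_G$-matching, $m_{\tau_G} > 0$, so $\mu_G(x)$ genuinely has degree-related structure: writing $n = |V_1| + |V_2|$ and $\tau_G = |V_1|$, the lowest-degree term of $\mu_G(x)$ is $(-1)^{\tau_G} m_{\tau_G}\, x^{n - 2\tau_G} = (-1)^{\tau_G} m_{\tau_G}\, x^{|V_2| - |V_1|}$. Since $\mu_G(x)$ is real-rooted (Heilmann–Lieb, or via Godsil's path-tree divisibility recalled in the excerpt) and symmetric under $x \mapsto -x$ up to the sign pattern, its roots come in pairs $\pm\mu$ together with a root at $0$ of multiplicity exactly $|V_2| - |V_1| = n - 2\tau_G$. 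Consequently the $\tau_G$-th largest root is the smallest \emph{positive} root, and it is strictly positive because $m_{\tau_G} \ne 0$ means $0$ is not a root of multiplicity larger than $n - 2\tau_G$. This establishes $\mu_{\tau_G}(G) > 0$.

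For the upper bound, I would use the relation between the matching polynomial and the characteristic polynomial of the path tree $T_G(u)$: by Godsil's theorem (recalled before (\ref{MatUni})), $\mu_G(x)$ divides $\psi_{A(T_G(u))}(x)$, so every root of $\mu_G$ — in particular $\mu_{\tau_G}(G)$ — is bounded in absolute value by $\rho(T_G(u)) \le \rho(\T_G)$. But $\rho(\T_G) = \sqrt{d-1} + \sqrt{r-1}$ for the $(d,r)$-biregular tree, which need \emph{not} be $\le \max\{\sqrt d, \sqrt r\}$, so this crude bound is too weak. The sharper route is to work directly with $B := A(B_{G})$-type identities: here $G$ itself is bipartite, so $A(G) = \begin{pmatrix} O & N \\ N^\top & O\end{pmatrix}$ with $N$ the $|V_1|\times|V_2|$ biadjacency matrix, $NN^\top = dI$ (since each $V_1$-vertex has degree $d$ and… no, off-diagonal entries count common neighbors, not zero in general). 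The honest approach: the matching polynomial of a bipartite graph $G$ satisfies $\mu_G(x) = x^{|V_2|-|V_1|} \cdot p(x^2)$ where $p$ is (up to sign) the matching-generating polynomial in squared variable, and a theorem of Heilmann–Lieb bounds the largest root of $\mu_G$ by $2\sqrt{\Delta - 1}$ where $\Delta$ is the max degree — still not matching the claimed $\max\{\sqrt d,\sqrt r\}$. The correct mechanism, and the \textbf{main obstacle}, is to identify $\mu_{\tau_G}(G)^2$ with the smallest eigenvalue of an associated matrix: since $0$ is a root of $\mu_G$ of multiplicity exactly $n - 2\tau_G$ and the nonzero roots of $\mu_G$ are $\pm$ square roots of the roots of the matching polynomial of the "folded" structure, one shows that the nonzero roots of $\mu_G(x)$ are exactly $\{\pm\sqrt{\theta} : \theta \text{ root of } \mu_{\text{something}}\}$, and the relevant positive semidefinite matrix has all eigenvalues in $[\,0, \max\{d,r\}\,]$ by a Gershgorin / trace argument exploiting biregularity ($\mathrm{row\ sum} \le d$ or $\le r$). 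I would therefore aim to prove directly that $\mu_{\tau_G}(G)^2 \le \max\{d, r\}$ by comparing $\mu_G$ to the matching polynomial of a disjoint union of stars $K_{1,d}$ and $K_{1,r}$ (whose matching polynomials are $x^{d-2}(x^2 - d)$ and $x^{r-2}(x^2-r)$), using the standard recursion $\mu_G(x) = \mu_{G-e}(x) - \mu_{G - \{u,v\}}(x)$ and an interlacing/monotonicity argument to push the smallest positive root down to that of the star, giving exactly $\sqrt d$ or $\sqrt r$; the subtlety is controlling which of the two star-values governs the $\tau_G$-th root, which is where $\max\{\sqrt d, \sqrt r\}$ enters.
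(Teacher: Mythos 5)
Your treatment of the first two assertions matches the paper: Hall's condition via the edge count $d|U|\le r|N(U)|$ (with $d\ge r$ forced when $|V_1|\le|V_2|$, since $d|V_1|=r|V_2|$) gives a matching saturating the smaller side, and $m_{\tau_G}>0$ then pins the multiplicity of the zero root at $n-2\tau_G$ and makes the $\tau_G$-th largest root strictly positive. That part is fine.

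The upper bound, however, is where your proposal has a genuine gap, and you flag it yourself as "the main obstacle" without resolving it. None of the routes you sketch closes: Godsil's path-tree bound and the Heilmann--Lieb bound control the \emph{largest} root and are too weak, as you note; the Gershgorin/trace idea is not attached to any concrete matrix; and the edge-deletion recursion $\mu_G=\mu_{G-e}-\mu_{G-\{u,v\}}$ with a comparison to stars is left as a hope, with the direction of movement of the $\tau_G$-th root under deletion unestablished. The paper's argument is much more elementary and you came close to it with the word "trace" but never wrote it down: writing (say, for $d\ge r$, $n_1=|V_1|=\tau_G$)
$$\mu_G(x)=x^{n_2-n_1}\prod_{j=1}^{n_1}\bigl(x^2-\mu_j(G)^2\bigr)$$
and comparing the coefficient of $(x^2)^{n_1-1}$ with the expansion $\sum_i(-1)^i m_i (x^2)^{n_1-i}$ gives the identity $\sum_{j=1}^{n_1}\mu_j(G)^2=m_1=e(G)=d\,n_1$. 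Since $\mu_{n_1}(G)^2$ is the smallest of the $n_1$ summands, it is at most their average $d$, i.e.\ $\mu_{\tau_G}(G)\le\sqrt{d}=\max\{\sqrt d,\sqrt r\}$ in this case (and symmetrically $\le\sqrt r$ when $r\ge d$). Without this observation --- that the number of edges is simultaneously $m_1$, the sum of the squared nonzero root-pairs, and $d\,n_1$ by biregularity --- your proof of the bound does not go through.
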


\begin{proof}
Let $V_1,V_2$ be the two parts of $G$ consisting of vertices of degree $d$ and $r$ respectively.
Then $d |V_1|=r |V_2|$.
Assume first that $d \ge r$ (or $|V_1| \le |V_2|$).
We assert that $G$ has a matching covering all vertices of $V_1$, which implies that
$G$ has a $\tau_G$-matching as $|V_1|=\tau_G$.
For any nonempty subset $U$ of $V_1$, letting $N(U)$ be the set of neighbors of the vertices of $U$, surely $N(U) \subseteq V_2$, and
$  d \cdot |U| \le r\cdot |N(U)|.$
So we have $| N(U)| \ge |U|$ as $d \ge r$, and hence arrive at the assertion by Hall’s Marriage Theorem.

Denote $n_1:=|V_1|=\tau_G$, and $n_2:=|V_2|$.
The matching polynomial of $G$ can be written as
\begin{equation}\label{MatChar}
\begin{split}
 \mu_G(x)&=\sum_{i=0}^{n_1} (-1)^i m_i x^{n_1+n_2-2i}=x^{n_2-n_1} \sum_{i=0}^{n_1} (-1)^i m_i (x^2)^{n_1-i}\\
 &=x^{n_2-n_1} (x^2-\mu_1(G)^2)\cdots (x^2-\mu_{n_1}(G)^2),
 \end{split}
 \end{equation}
where $\mu_1(G), \ldots, \mu_{n_1}(G)$ are the first $n_1$ nonnegative largest roots of $\mu_G(x)$ arranged in non-increasing order.
As $G$ has an $n_1$-matching,
$$\mu_1(G)^2 \cdots  \mu_{n_1}(G)^2=m_{n_1}>0,$$ and then $\mu_{n_1}(G)=\mu_{\tau_G}(G)>0$.
Noting that $m_1$ is exactly the number of edges of $G$, we have
$$n_1 \mu_{n_1}(G)^2 \le \mu_1(G)^2 + \cdots + \mu_{n_1}(G)^2=m_{1}=n_1 d,$$
and hence $\mu_{n_1}(G)^2 \le d$.

Similarly, if $r \ge d$, then $n_2 \le n_1$ and $G$ has an $n_2$-matching covering all vertices of $V_2$.
In this case, $\mu_{n_2}(G)=\mu_{\tau_G}(G)>0$, and $\mu_{n_2}(G)^2 \le r$.
The result now follows.
\end{proof}

By Lemma \ref{mutau} we know that $\mu_{\tau_H}(B_H)>0$ for a $(d,r)$-regular hypergraph $H$.
Note that $\mu_{B_H}(x)$ has zero root with multiplicity $|\nu(H)-e(H)|$.
So, the lower bound of $\mu_{\tau_H}(B_H)$ in Eq. (\ref{conj}) is equivalent to say  that
 $\mu_{B_H}(x)$ has no roots $\mu$ with $0 < |\mu|< |\sqrt{d-1}-\sqrt{r-1}|$.
%If $\mu_{\tau_H}(B_H)$ has a below, we will show that $H$ has a left-sided Ramanujan $2$-covering.
Note that if $d=r$, the condition of Eq. (\ref{conj}) is trivial.
So, we pose the following problem:

{\bf Problem 1.} Does Eq. (\ref{conj}) hold for every $(d,r)$-regular hypergraph or $(d,r)$-regular left-sided Ramanujan hypergraph $H$ with $d \ne r$?

Finally, we give an explicit construction of infinitely many $(q+1,q)$-regular left-sided Ramanujan hypergraphs, where $q$ is a prime power greater than $4$.
We need some knowledge for preparation.
Let $G$ be a simple graph and $G^\phi$ be a $k$-fold covering of $G$,
where $\phi: E(\overleftrightarrow{G})\to\mathbb{S}_k$.
For each permutation $\s \in \mathbb{S}_k$, we associate it with a permutation matrix $P_{\s}=(p_{ij})$ of order $k$ such that $p_{ij}=1$ if and only if $i=\s(j)$.
In fact, the above mapping $\s \to P_{\s}$ is the permutation representation of $\mathbb{S}_k$.
Adopting the terminology from \cite{MoharT}, $G^\phi$ is called an \emph{Abelian lift} (or \emph{Abelian covering}) if all permutations $\s$ or representation matrices $P_{\s}$ commute with each other for all $\s \in \phi(E(\overleftrightarrow{G}))$.
 Mohar and Tayfeh-Rezaie \cite{MoharT} used the representation matrices to define the covering graph $G^\phi$, which are consistent with the covering graph defined in this paper.

Suppose that $G^\phi$ is an Abelian $k$-lift of $G$.
As the matrices $P_{\s}$ commute with each other for all $\s \in \phi(E(\overleftrightarrow{G}))$, they have a common basis of
$k$ eigenvectors  denoted by $\mathcal{B}_\phi$.
Mohar and Tayfeh-Rezaie \cite{MoharT} proved that a beautiful result on the
spectrum of $A(G^\phi)$, where $\la_x(A)$ denotes the eigenvalue of $A$ corresponding to $x$ if $x$ is an eigenvector of $A$.

\begin{lem}\cite{MoharT}\label{Mspec}
Let $G$ be a simple graph and $G^\phi$ be an Abelian $k$-lift of $G$, where $\phi: E(\overleftrightarrow{G})\to\mathbb{S}_k$.
For each $x \in \mathcal{B}_\phi$, let $A_x$ be the matrix obtained from the adjacency matrix $A(G)$ of $G$ by replacing each nonzero $(u,v)$-entry by $\la_x(P_{\phi(u,v)})$.
Then the spectrum of $A(G^\phi)$ is the multi-set union of the spectra of $A_x$ for all $x \in \mathcal{B}_\phi$.
\end{lem}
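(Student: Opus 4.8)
The plan is to realize $A(G^\phi)$ as a block matrix over $V(G)$ whose off-diagonal blocks are the permutation representation matrices $P_{\phi(u,v)}$, and then to simultaneously block-diagonalize it using a common eigenbasis of these commuting permutation matrices. Concretely, I would identify $\mathbb{C}^{V(G^\phi)}$ with $\mathbb{C}^{V(G)}\otimes\mathbb{C}^{[k]}$ via $(u,i)\leftrightarrow e_u\otimes e_i$. From the definition of the derived graph, the $(u,i),(v,j)$-entry of $A(G^\phi)$ equals $1$ precisely when $uv\in E(G)$ and $i=\phi(u,v)(j)$, i.e. when $(P_{\phi(u,v)})_{ij}=1$; hence the $(u,v)$-block of $A(G^\phi)$ equals $P_{\phi(u,v)}$ when $uv\in E(G)$ and is zero otherwise. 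Equivalently $A(G^\phi)=\sum_{(u,v)}E_{uv}\otimes P_{\phi(u,v)}$, the sum being over the arcs of $\overleftrightarrow{G}$, with $E_{uv}$ the matrix unit.

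Next I would invoke the hypothesis that $G^\phi$ is an Abelian lift: the matrices $P_{\s}$ with $\s\in\phi(E(\overleftrightarrow{G}))$ form a commuting family, and each is unitary, hence normal; a commuting family of normal matrices is simultaneously unitarily diagonalizable, so there is an orthonormal basis $\mathcal{B}_\phi=\{x^{(1)},\dots,x^{(k)}\}$ of $\mathbb{C}^{[k]}$ consisting of common eigenvectors, with $P_{\phi(u,v)}x=\la_x(P_{\phi(u,v)})x$ for every $x\in\mathcal{B}_\phi$ and every arc $(u,v)$. I would also record that $\phi(v,u)=\phi(u,v)^{-1}$ forces $P_{\phi(v,u)}=P_{\phi(u,v)}^{-1}=P_{\phi(u,v)}^{*}$, hence $\la_x(P_{\phi(v,u)})=\overline{\la_x(P_{\phi(u,v)})}$; this makes each $A_x$ Hermitian and therefore gives it a real spectrum, consistent with $A(G^\phi)$ being symmetric.

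The core computation is then short: fix $x\in\mathcal{B}_\phi$ and $w\in\mathbb{C}^{V(G)}$, and evaluate $A(G^\phi)(w\otimes x)$ at coordinate $(u,i)$. It equals $\sum_{v:uv\in E(G)}\sum_j (P_{\phi(u,v)})_{ij}w_v x_j=\sum_{v:uv\in E(G)}w_v(P_{\phi(u,v)}x)_i=\bigl(\sum_{v:uv\in E(G)}\la_x(P_{\phi(u,v)})w_v\bigr)x_i=(A_xw)_u\,x_i$, i.e. $A(G^\phi)(w\otimes x)=(A_xw)\otimes x$. Thus each subspace $\mathbb{C}^{V(G)}\otimes x$ is $A(G^\phi)$-invariant, and on it $A(G^\phi)$ is intertwined (via the isomorphism $w\mapsto w\otimes x$) with $A_x$, so it has the same spectrum as $A_x$. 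Since $\mathcal{B}_\phi$ is a basis of $\mathbb{C}^{[k]}$, one has the orthogonal direct sum decomposition $\mathbb{C}^{V(G^\phi)}=\bigoplus_{x\in\mathcal{B}_\phi}\bigl(\mathbb{C}^{V(G)}\otimes x\bigr)$ into $A(G^\phi)$-invariant subspaces, and therefore, as multi-sets, $\spec A(G^\phi)=\bigcup_{x\in\mathcal{B}_\phi}\spec A_x$, which is the claim.

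I expect the only genuine subtlety to be bookkeeping of orientation conventions: one must make sure that the convention $e=(u,v)$, $e^{-1}=(v,u)$ used to define $G^\phi$ is matched consistently with the placement of $P_{\phi(u,v)}$ versus $P_{\phi(u,v)}^{\top}$ in the $(u,v)$-block, so that $A_x$ emerges with entries $\la_x(P_{\phi(u,v)})$ (and is Hermitian) rather than with transposed entries; and that a common orthonormal eigenbasis of the \emph{appearing} permutation matrices exists even though they need not generate all of $\mathbb{S}_k$. Both points are routine once the explicit block form of $A(G^\phi)$ is written down.
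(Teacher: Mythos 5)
The paper offers no proof of this lemma---it is quoted from Mohar and Tayfeh-Rezaie \cite{MoharT} without argument---so there is no internal proof to compare against. Your proposal is correct and complete: writing $A(G^\phi)=\sum_{(u,v)}E_{uv}\otimes P_{\phi(u,v)}$ over the arcs, simultaneously unitarily diagonalizing the commuting (hence normal, being unitary) permutation matrices, and verifying the intertwining relation $A(G^\phi)(w\otimes x)=(A_xw)\otimes x$ yields the invariant decomposition $\mathbb{C}^{V(G^\phi)}=\bigoplus_{x\in\mathcal{B}_\phi}\mathbb{C}^{V(G)}\otimes x$ and hence the claimed multi-set union of spectra; this is the standard argument for such results, and your side remarks (that $\phi(v,u)=\phi(u,v)^{-1}$ makes each $A_x$ Hermitian, consistent with the real spectrum of the symmetric matrix $A(G^\phi)$) correctly dispose of the only orientation subtlety.
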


\begin{proof}[Proof of Theorem \ref{LR-const}]
Let $H$ be the hypergraph corresponding to the affine plane $AG(2,q)$, where $q$ is a prime power $q$ greater than $4$.
Let $B_H$ be the incidence graph of $H$, which is a $(q+1,q)$-biregular graph.
Let $\{v_0,e_0\}$ be a fixed edge of $B_H$, where $v_0 \in V(HG)$, $e_0 \in E(H)$ and $v_0 \in e_0$.
Let $\phi: E(\overleftrightarrow{B_H})\to\mathbb{S}_k$ such that $\phi((v_0,e_0))=(12\cdots k)=:\s_0$ (a cyclic permutation of $\mathbb{S}_k$), $\phi((e_0,v_0))=\s_0^{-1}$, and $\phi(v,e)=1$ for all other arcs of $(v,e) \in  E(\overleftrightarrow{G})$.
In this case, $\mathcal{B}_\phi$ consists of $k$ eigenvectors $x_j$ of $P_{\s_0}$ corresponding to the eigenvalues $\la_{x_j}(P_{\s_0})=e^{{\bf i} 2 \pi j/k}$ for $j=0,1,\ldots,k-1$, where ${\bf i}=\sqrt{-1}$.
Surely, $\la_{x_j}(P_{\s_0^{-1}})=\la_{x_j}(P^{-1}_{\s_0})=e^{-{\bf i}2 \pi j/k}$ for  $j=0,1,\ldots,k-1$.
For each  $j=0,1,\ldots,k-1$, let $A_{x_j}$ be the matrix obtained from the $A(B_H)$ only by replacing the $(v_0, e_0)$-entry by $e^{{\bf i} 2 \pi j/k}$ and the $(e_0, v_0)$-entry by $e^{-{\bf i} 2 \pi j/k}$. Surely, $A_{x_0}=A(B_H)$.

Observe that $A(B_H)$ has eigenvalues $\pm \sqrt{q^2+q}$ respectively with multiplicity $1$, $\pm \sqrt{q}$ respectively with multiplicity $q^2-1$, and $0$ with multiplicity $q$.
Noting that $\tau_H=q^2$, so
$\la_{\tau_H}(A(B_H))=\la_{q^2}(A(B_H))=\sqrt{q}$.
Write $A_{x_j}$ as the form:
$$ A_{x_j}=A(B_H) + B_j,$$
where, after a suitable labeling of the vertices,
$$B_j=\left(\begin{array}{cc} 0 & e^{{\bf i} 2 \pi j/k}-1 \\
e^{-{\bf i} 2 \pi j/k}-1 & 0 \end{array}\right) \oplus O_{2q^2+q-2},$$
where $O_{2q^2+q-2}$ is a zero matrix of order $2q^2+q-2$.
So, when $q \ge 5$, for each  $j=1,\ldots,k-1$,
$$ \la_{q^2}(A_{x_j}) \ge \la_{q^2}(A(B_H))+ \la_{2q^2+q}(B_j)=\sqrt{q}-\sqrt{2(1-\cos(2 \pi j/k)}\ge \sqrt{q}-\sqrt{q-1}.$$
By Lemma \ref{Mspec},
the spectrum of $A(B_H^\phi)$ is a multiset union of the spectra of $A(B_H)$ and $A(B_{x_j})$ for $j=1,2,\ldots,k-1$.
So,
$$ \la_{kq^2}(A(B_H^\phi)) \ge \min\{ \la_{q^2}(A(B_H)), \la_{q^2}(A_{x_j}): j=1,2,\ldots,k-1\}\ge \sqrt{q}-\sqrt{q-1}.$$
Note that $\tau_{H_B^\phi}=k \tau_H=kq^2$.
So,
\begin{align*}
 \la_{\tau_{H_B^\phi}}(A(H_B^\phi))& =\la_{kq^2}(A(H_B^\phi)) = \la_{kq^2}(A(B_H^\phi))^2-(q+1) \\
 & \ge  \left(\sqrt{q}-\sqrt{q-1}\right)^2-(q+1)=q-2-2\sqrt{q(q-1)}.
 \end{align*}
By definition, $H_B^\phi$ is left-sided Ramanujan for all positive integers $k$ with $\phi: E(\overleftrightarrow{B_G})\to\mathbb{S}_k$ defined as in the above.
\end{proof}

%We end this paper with another problem:

%{\bf Problem 2.}
Maybe like the difficulty of characterizing non-bipartite Ramanujan $d$-regular graphs,
the characterization of a $(d,r)$-regular hypergraph being both left-sided and right-sided Ramanujan hypergraph seems very difficult for $d \ne r$.

\end{document}